\newtheorem{theorem}[subsubsection]{Theorem}
\newtheorem{lemma}[subsubsection]{Lemma}
\newtheorem{proposition}[subsubsection]{Proposition}
\newtheorem{corollary}[subsubsection]{Corollary}
\newenvironment{remark}{\medskip \refstepcounter{subsubsection}
\noindent  {\bf Remark \thetheorem}.\rm}{\,}
\newenvironment{definition}{\medskip \refstepcounter{subsubsection}
\noindent  {\bf Definition \thetheorem}.\rm}{\,}
\newtheorem{theointro}{Theorem}
\def\Aut{\mathrm{Aut}}
\def\cO{\mathcal{O}}
\def\cH{\mathcal{H}}
\def\cB{\mathcal{B}}
\def\cL{\mathcal{L}}
\def\co{\mathcal{O}_0}
\def\del{\partial}
\def\delb{\overline\partial}
\def\h{\mathfrak{h}}
\def\g{\mathfrak{g}}
\def\R{\mathbb{R}}
\def\N{\mathbb{N}}
\def\P{\mathbb{P}}
\def\C{\mathbb{C}}
\def\uS{\underline{S}}
\def\om{\omega}
\def\ep{\varepsilon}
\def\>{\rangle}
\def\<{\langle}
\def\>{\rangle}
\def\a{\alpha}
\begin{document}

\title[Extremal metrics and lower bound of the modified K-energy]
{Extremal metrics and lower bound of the modified K-energy}
\author[Y. Sano]{Yuji Sano}
\author[C. Tipler]{Carl Tipler}
\address{2-39-1, Kurokami, Chuo-ku, Kumamoto, 860-8555, Japan, 
Graduate School of Science and Technology, Kumamoto University ; D\'epartement de math\'ematiques, 
Universit\'e du Qu\'ebec \`a Montr\'eal, 
Case postale 8888, succursale centre-ville,
Montr\'eal (Qu\'ebec), H3C 3P8}
\email{sano@sci.kumamoto-u.ac.jp ; carl.tipler@cirget.ca}

\date{\today}

\begin{abstract}
We provide a new proof of a result of X.X. Chen and G.Tian \cite{ct}: for a polarized extremal K\"ahler manifold, an extremal metric attains the minimum of the modified K-energy. The proof uses an idea of Chi Li \cite{li} adapted to the extremal metrics using some weighted balanced metrics.
\end{abstract}

\maketitle

\section{Introduction}
\label{secintro}
Extremal metrics were introduced by Calabi \cite{c1}. Let $(X,\om)$ be a K\"ahler manifold of complex dimension $n$. An extremal metric is a critical point of the functional
\begin{eqnarray*}
  g & \mapsto & \int_X (S(g))^2 \frac{\om_g^n}{n!}
\end{eqnarray*}
defined on K\"ahler metrics $g$ representing the K\"ahler class $[\om]$, where $S(g)$ is the scalar curvature of the metric $g$.
Constant scalar curvature K\"ahler metrics (CSCK for short), and in particular K\"ahler-Einstein metrics, are extremal metrics. In this work we will focus on the polarized case, assuming that there is an ample holomorphic line bundle $L\rightarrow X$ with $c_1(L)=[\om]$. In this special case, it has been conjectured by Yau in the K\"ahler-Einstein case \cite{yau},  and then in the CSCK case by the work of Tian \cite{tian} and Donaldson \cite{Don02} that the existence of a CSCK metric representing $c_1(L)$ should be equivalent to a GIT stability of the pair $(X,L)$. This conjecture has been extended to extremal metrics by Sz\'ekelyhidi \cite{sz} and Mabuchi \cite{ma12}.

\par
Let $(X,L)$ be a polarized K\"ahler manifold. Donaldson has shown \cite{Don01} that if $X$ admits a CSCK metric in $c_1(L)$, and if $\Aut(X,L)$ is discrete, then the CSCK metric can be approximated by a sequence of balanced metrics. This approximation result implies in particular the unicity of a CSCK metric in its K\"ahler class. 
This method has been adapted by Mabuchi \cite{ma04} to the extremal metric setting to prove unicity of an extremal metric up to automorphisms in a polarized K\"ahler class. Then, Chen and Tian proved unicity of an extremal metric in its K\"ahler class up to automorphisms with no polarization assumption \cite{ct}.

\par
In a sequel to his work on balanced metrics \cite{Don05}, Donaldson shows that if $\Aut(X,L)$ is discrete, a CSCK metric is an absolute minimum of the Mabuchi energy $E$, or K-energy, introduced by Mabuchi \cite{ma}.
The approximation result of Donaldson does not hold true for CSCK metrics if the automorphism group is not discrete. There are counter-examples of Ono, Yotsutani and the first author \cite{osy}, or Della Vedova and Zudas \cite{dz}. However, Li managed to show that even if $\Aut(X,L)$ is not discrete, a CSCK metric would provide an absolute minimum of $E$ \cite{li}.

\par
 By a theorem of Calabi \cite{c2}, extremal metrics are invariant under a maximal connected compact sub-group $G$ of the reduced automorphism group $\Aut_0(X)$ \cite{fuj}. Any two such compact groups are conjugated in $\Aut_0(X)$ and the study of extremal metrics is done modulo one such group.
In the extremal setting, the modified K-energy $E^G$ (see definition \ref{def:modK}) plays the role of the K-energy for CSCK metrics. This functional has been introduced independently by Guan \cite{gu}, Simanca \cite{si} and Chen and Tian \cite{ct} and is defined  on the space of $G$-invariant K\"ahler potentials with respect to a $G$-invariant metric. In \cite{ct}, Chen and Tian prove that extremal metrics minimize the modified K-energy up to automorphisms of the manifold, with no polarization assumption. In this paper, we give a different proof of this result in the polarized case. We generalize Li's work to extremal metrics, using some weighted balanced metrics, which are called $\sigma$-balanced metrics (see definition \ref{def:sigma-balanced} in section 2):

\begin{theointro}
\label{theo:min}
Let $(X,L)$ be a polarized K\"ahler manifold and $G$ a maximal connected compact sub-group of the reduced automorphism group $\Aut_0(X)$. Then $G$-invariant extremal metrics representing $c_1(L)$ attain the minimum of the modified K-energy $E^G$.
\end{theointro}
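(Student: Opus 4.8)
The plan is to establish the stronger statement that if $\om_0$ is \emph{any} $G$-invariant extremal metric representing $c_1(L)$, with $G$-invariant K\"ahler potential $\phi_0$ relative to a fixed $G$-invariant reference metric, then
\[
  E^G(\phi)\ \ge\ E^G(\phi_0)\qquad\text{for every $G$-invariant potential }\phi ;
\]
since every $G$-invariant K\"ahler metric in $c_1(L)$ has a $G$-invariant potential, this exhibits $\om_0$ as a minimizer of $E^G$. The proof quantizes $E^G$, following the strategy of Chi Li \cite{li} and replacing balanced metrics by the $\sigma$-balanced metrics of Definition \ref{def:sigma-balanced}. For $k\gg 0$, let $V_k=H^0(X,L^k)$, endowed with the $G$-action obtained by lifting the $G$-action on $X$ (after replacing $L$ by a power if necessary), let $\cB_k$ denote the space of $G$-invariant Bergman metrics in $c_1(L)$ -- equivalently, of $G$-invariant positive Hermitian inner products on $V_k$ -- and let $\cD_k^\sigma$ be the $\sigma$-weighted analogue of Donaldson's functional on $\cB_k$: it is convex along the geodesics of the natural symmetric-space metric on $\cB_k$, and its critical points are exactly the $\sigma$-balanced metrics, the weight $\sigma$ encoding the one-parameter subgroup generated by the extremal vector field $V$. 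Write $Hilb_k\colon\cH^G\to\cB_k$ for the ($\sigma$-weighted) $L^2$-quantization map and $FS_k$ for the Fubini--Study map in the other direction.

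The argument then rests on three facts, each a $\sigma$-weighted, $G$-equivariant refinement of a corresponding statement in \cite{li}. \emph{Convergence:} for each fixed $\phi\in\cH^G$,
\[
  \frac{1}{k^{n+1}}\,\cD_k^\sigma\bigl(Hilb_k(\phi)\bigr)\ \longrightarrow\ E^G(\phi)+C
\]
as $k\to\infty$, with $C$ independent of $\phi$; this comes from the equivariant asymptotic expansion of the Bergman kernel by repeating Donaldson's computation of the unweighted case, the weight producing exactly the correction terms by which $E^G$ differs from the ordinary K-energy. \emph{Almost $\sigma$-balanced:} the subleading term in the asymptotic expansion of the density of states $\sum_i|s_i|^2$ attached to $Hilb_k(\phi_0)$ is $\tfrac12 S(\om_0)$, and the extremal equation $S(\om_0)=\uS+\theta_V(\om_0)$, where $\theta_V(\om_0)$ is the holomorphy potential of $V$ with respect to $\om_0$, expresses its non-constant part through the holomorphy potential that the weight $\sigma$ is built to absorb; hence the $\sigma$-weighted density of states attached to $Hilb_k(\phi_0)$ is constant modulo lower order, so that $Hilb_k(\phi_0)$ solves the $\sigma$-balancing equation up to a small error and
\[
  \bigl\|\,\grad\cD_k^\sigma\bigl(Hilb_k(\phi_0)\bigr)\,\bigr\|\ =\ o\bigl(k^{n+1}\bigr).
\]
\emph{Comparison:} using the convexity of $\cD_k^\sigma$ along the $\cB_k$-geodesic joining $Hilb_k(\phi_0)$ to $Hilb_k(\phi)$ together with a bound on the length of that geodesic, one obtains
\[
  \frac{1}{k^{n+1}}\,\cD_k^\sigma\bigl(Hilb_k(\phi)\bigr)\ \ge\ \frac{1}{k^{n+1}}\,\cD_k^\sigma\bigl(Hilb_k(\phi_0)\bigr)-\delta_k,\qquad\delta_k\to 0 .
\]
Letting $k\to\infty$ and invoking the convergence on both sides gives $E^G(\phi)+C\ge E^G(\phi_0)+C$, which is the desired inequality.

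The main obstacle is the interaction between the last two facts, and it is here that the scheme of \cite{li} has to be followed most carefully: the gradient defect at the extremal metric must be estimated, through the weighted Bergman expansion --- where the extremal equation is the only geometric input --- precisely enough that, even when multiplied by the length of the $\cB_k$-geodesic (which itself grows with $k$), the product remains $o(k^{n+1})$, so that $\delta_k\to 0$ after the $k^{-n-1}$ normalization. A secondary point, requiring care but not difficulty, is to carry the whole quantization through $G$-equivariantly, so that $Hilb_k$ and $FS_k$ preserve $G$-invariance and the weight $\sigma$ is the correct moment-map datum attached to $V$; here one relies on Calabi's theorem that extremal metrics are invariant under a maximal compact subgroup and on the standard structure of $\Aut_0(X)$.
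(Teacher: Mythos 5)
Your proposal follows essentially the same route as the paper: quantize $E^G$ by a convex $\sigma$-weighted functional on Bergman metrics, use the extremal equation $S=\uS+\theta$ through the Bergman kernel expansion to show that $Hilb_k(\phi_0)$ is an approximate critical point, and combine convexity along $\cB_k$-geodesics with convergence of the quantized functionals to $E^G$ to pass to the limit. The only cosmetic differences are that the paper splits your convergence step into two parts (Propositions \ref{prop:LquantizeE} and \ref{prop:compare}, relating $\cL_k^{\sigma}$ and $Z_k^{\sigma}\circ Hilb_k$) and proves the result first for the circle generated by the extremal vector field before upgrading to the maximal compact $G$ via Remark \ref{rmk:mab}.
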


The proof relies on two observations. We will consider a sequence of Fubini-Study metrics $\om_k$ associated to Kodaira embeddings of $X$ into higher and higher dimension projective spaces. The first observation is that if we define $\om_k$ to be the metric associated to an extremal metric in $c_1(L)$ by the map $Hilb_k$ (see definition in section~\ref{sec:background}, equation~(\ref{eq:Hilb})), then $\om_k$ will be close to a $\sigma$-balanced metric. The second point
is that $\sigma$-balanced metrics, if they exist, are minima of the functionals $Z^{\sigma}_k$ (section~\ref{sec:background}, equation~(\ref{eq:Zk})) that converge to the modified Mabuchi functional. Then a careful analysis of the convergence properties of the $\om_k$ and $Z_k^{\sigma}$ yields the proof of our main result.

\begin{remark}
We shall mention that Guan shows in \cite{gu} that extremal metrics are local minima, assuming the existence of $C^2$-geodesics in the space of K\"ahler potentials.
\end{remark}

\subsection{Plan of the paper}
In section~\ref{sec:background}, we review the definition of extremal metrics and recall quantization of CSCK metrics. We then introduce $\sigma$-balanced metrics and the relative functionals. Then in section~\ref{sec:min}, we prove the main theorem. In the Appendix, we collect some facts and proofs of properties of $\sigma$-balanced metrics.

\subsection{Acknowledgments}
The first author is supported by MEXT, Grant-in-Aid for Young Scientists (B), No. 22740041.
The part of the article concerning $\sigma$-balanced metrics is presented by the first author in 2011 Complex Geometry and Symplectic Geometry Conference held in the University of Science and Technology in China.
He would like to thank the organizers for the invitation and the kind hospitality. 
The second author would like to thank Hongnian Huang, Vestislav Apostolov and Andrew Clarke for useful discussions and their interest in this work, as well as Song Sun and Valentino Tosatti for their  encouragement.

\section{Extremal metrics and Quantization}
\label{sec:background}
\subsection{Quantization}
                    
Let $(X,L)$ be a polarized K\"ahler manifold of complex dimension $n$. Let $\cH$ be the space of smooth K\"ahler potentials with respect to a fixed K\"ahler form $\om \in c_1(L)$  :
\begin{eqnarray*}
\cH= \lbrace \phi \in C^{\infty} (X) \;\vert \; \om_{\phi}:=\om + \sqrt{-1}\del\delb \phi > 0 \rbrace.
\end{eqnarray*}

For each $k$, we can consider $\cH_k$ the space of hermitian metrics on $L^{\otimes k}$. To each element $h\in \cH_k$ one associates a metric $ -\sqrt{-1} \del\delb \log(h)$ on $X$, identifying the spaces $\cH_k$ to $\cH$. Write $\om_h$ to be the curvature of the hermitian metric $h$ on $L$. Fixing a base metric $h_0$  in $\cH_1$ such that $\om=\om_{h_0}$ the correspondence reads
\begin{eqnarray*}
\om_{\phi}=\om_{e^{-\phi}h_0}=\om+\sqrt{-1}\del\delb \phi .
\end{eqnarray*}
We denote by $\cB_k$ the space of positive definite Hermitian forms on $H^0(X,L^{\otimes k})$. Let $N_k=dim(H^0(X,L^k))$.
The spaces $\cB_k$ are identified with $GL_{N_k}(\C)/ U(N_k)$ using the base metric $h_0^k$. These symmetric spaces come with metrics $d_k$ defined by  Riemannian metrics:
\begin{eqnarray*}
(H_1,H_2)_h=Tr(H_1H^{-1}\cdot H_2 H^{-1}).
\end{eqnarray*}

\noindent There are maps :
\begin{eqnarray*}
Hilb_k :  \cH & \rightarrow &\cB_k \\
FS_k : \cB_k &\rightarrow &\cH
\end{eqnarray*}
defined by :
\begin{eqnarray*}
\forall h\in \cH\;, \; s\in H^0(X,L^{\otimes k})\;, \; \vert\vert s\vert\vert ^2_{Hilb_k(h)}=\int_X \vert s \vert_{h^k}^2 d\mu_h
\end{eqnarray*}
and
\begin{eqnarray*}
\forall H \in \cB_k\; , \; 
FS_k(H)= \frac{1}{k} \log \bigg(\frac{1}{N_k}  \sum_{\alpha} \vert s_{\alpha}\vert_{h_0^k}^2\bigg)
\end{eqnarray*}
where  $\lbrace s_{\alpha}\rbrace$ is an orthonormal basis of $H^0(X,L^{\otimes k})$ with respect to $H$
and $d\mu_{h}=\dfrac{\om_{h}^n}{ n!}$ is the volume form. 
Note that $\om_{FS_k(H)}$ is the pull-back of the Fubini-Study metric on $\C\P_{N_k-1}$ under the projective embedding induced by $\lbrace s_{\alpha}\rbrace$.
A result of Tian \cite{tian90} states that any K\"ahler metric $\om_{\phi}$ in $c_1(L)$ can be approximated by projective metrics, namely
\begin{eqnarray*}
\lim_{k\rightarrow \infty} FS_k \circ Hilb_k (\phi) = \phi
\end{eqnarray*}
where the convergence is uniform on $C^2(X,\R)$ bounded subsets of $\cH$.

\noindent The metrics satisfying 
$$
FS_k\circ Hilb_k(\phi)=\phi
$$
are called balanced metrics, and the existence of such metrics is equivalent to the Chow stability of $(X,L^k)$ by Zhang \cite{zha} and Wang \cite{wa}.
Let $\Aut(X,L)$ be the group of automorphisms of the pair $(X,L)$.  
From the work of Donaldson \cite{Don01}, if $X$ admits a CSCK metric in the K\"ahler class $c_1(L)$, and if $\Aut(X,L)$ is discrete, then there are balanced metrics $\om_{\phi_k}$ for $k$ sufficiently large, with
$$
FS_k\circ Hilb_k(\phi_k)=\phi_k
$$ 
and these metrics converge to the CSCK metric on $C^{\infty}(X,\R)$ bounded subsets of $\cH$.

In the proof of these results, the density of state function plays a central role.
For any $\phi\in\cH$ and $k>0$, let $\lbrace s_{\alpha} \rbrace$ be an orthonormal basis of $H^0(X,L^k)$ with respect to $Hilb_k(\phi)$. The $k^{th}$ Bergman function of $\phi$ is defined to be :
$$
\rho_k(\phi)=\sum_{\alpha}\vert s_{\alpha}\vert^2_{h^k}.
$$
It is well known that a metric $\phi\in Hilb_k(\cH)$ is balanced if and only if $\rho_k(\phi)$ is constant. 
A key result in the study of balanced metrics is the following expansion:

\begin{theorem}[\cite{cat},\cite{ruan},\cite{tian90},\cite{zel}]
The following uniform expansion holds
$$
\rho_k(\phi)=k^n+A_1(\phi)k^{n-1}+A_2(\phi)k^{n-2}+...
$$
with $A_1(\phi)=\frac{1}{2}S(\phi)$ is half of the scalar curvature of the K\"ahler metric $\om_\phi$
and for any $l$ and $R\in \N$, there is a constant $C_{l,R}$ such that
$$
\vert\vert\rho_k(\phi) -\sum_{j\leq R} A_j k^{n-j} \vert \vert_{C^l} \leq k^{n-R}.
$$
\end{theorem}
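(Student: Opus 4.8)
The plan is to establish the expansion pointwise by a local model computation, and then upgrade it to the uniform $C^l$ estimate. First I would fix $x_0\in X$ and choose holomorphic normal coordinates $(z^1,\dots,z^n)$ centered at $x_0$, together with a local holomorphic frame of $L$ near $x_0$, so that the metric $h$ with $\om_h=\om_\phi$ is represented in this frame by $e^{-\psi}$ with local K\"ahler potential $\psi(z)=|z|^2+O(|z|^3)$, the cubic and higher Taylor coefficients of $\psi$ being explicit universal polynomials in the curvature of $\om_\phi$ at $x_0$ and its covariant derivatives. After the rescaling $z\mapsto z/\sqrt k$, the weighted space $(H^0(X,L^k),h^k,d\mu_\phi)$ looks, to leading order, like the model Bargmann--Fock space on $\C^n$ with Gaussian weight $e^{-|z|^2}$, whose diagonal Bergman function is a positive constant (equal to $\pi^{-n}$ in a suitable normalization); the higher Taylor terms of $\psi$ enter as perturbations of relative size $k^{-1/2},k^{-1},\dots$, and these produce the successive coefficients $A_j$.

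To make this rigorous I would use the peak-section method. For each multi-index $p\in\N^n$ I build an almost-holomorphic section of $L^k$ equal, in the chosen frame and coordinates, to $\chi(z)\,z^p$, with $\chi$ a cutoff supported in a ball of radius $\sim(\log k)/\sqrt k$; then $\delb$ of this section is supported where $\chi$ is not locally constant, hence is $O(k^{-\infty})$ in $L^2(h^k)$. Since $k\om_\phi$ has curvature bounded below by a positive constant, H\"ormander's $L^2$ estimate for $\delb$ corrects it to a genuine section $S_{k,p}\in H^0(X,L^k)$, the correction being $O(k^{-\infty})$ in $L^2$ and, by elliptic regularity, in $C^l$ on a fixed neighbourhood of $x_0$. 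One then checks that the $S_{k,p}$ form an almost-orthonormal family and that, for evaluating $\rho_k(\phi)$ on the diagonal at $x_0$ up to order $k^{n-R}$, only finitely many $p$ contribute; this reduces the problem to a finite Gaussian integral in the rescaled variables, which I expand in powers of $k^{-1/2}$ using the Taylor expansion of $\psi$. The odd powers cancel by parity, producing a genuine expansion in $k^{-1}$, and carrying it out to the first nontrivial order while inserting the normal-coordinate expansion of $\psi$ identifies the $k^{n-1}$ coefficient as $\tfrac12 S(\phi)$.

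For the uniform $C^l$ bound on the remainder, the key observation is that the entire construction --- normal coordinates, cutoff, H\"ormander correction, Gaussian expansion --- depends smoothly on $x_0$ and on $\phi$, with all constants controlled by a $C^{l'}$ bound on $\phi$ (for some $l'$ depending on $l$ and $n$) and a positive lower bound on $\om_\phi$. Differentiating the resulting parametrix in $x_0$ and in variations of $\phi$ and rerunning the estimates --- or, equivalently, combining the off-diagonal decay of the Bergman kernel with Cauchy estimates --- then gives the claimed $C^l$ control, uniformly in $k$.

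The step I expect to be the main obstacle is the bookkeeping in the second paragraph: organizing the Gaussian-integral expansion so that the coefficient $\tfrac12 S(\phi)$ is read off cleanly, while simultaneously keeping control of the three error sources --- the cutoff, the H\"ormander correction, and the truncation of the sum over $p$ --- so that each is genuinely of lower order than the term being computed, uniformly in $x_0$ and $k$. An alternative that streamlines this error analysis, and which I would be happy to fall back on, is Zelditch's approach: one realizes $H^0(X,L^k)$ as the $k$-th Fourier component of the $L^2$ space of a unit circle bundle associated to $L$, so that $\rho_k(\phi)$ becomes a Fourier coefficient of the Szeg\H{o} kernel of the corresponding strictly pseudoconvex disc bundle; the Boutet de Monvel--Sj\"ostrand parametrix writes that Szeg\H{o} kernel as a complex Fourier integral operator with explicit phase, and a stationary-phase expansion in $k$ yields the full asymptotic series, with the $C^l$ estimates following from the uniformity of stationary phase and the smoothness of the symbol in all parameters.
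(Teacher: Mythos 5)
The paper does not prove this statement at all: it is imported as a black box from the cited references (Tian, Catlin, Zelditch, Ruan), so there is no in-paper argument to compare yours against. Judged on its own, your outline is a faithful sketch of how the result is actually established in that literature. The peak-section strategy --- normal coordinates plus a local frame putting the potential in the form $|z|^2+O(|z|^3)$, cutoffs at scale $(\log k)/\sqrt{k}$, H\"ormander $L^2$ correction, almost-orthogonality, reduction to Gaussian integrals with parity cancellation of half-integer powers --- is exactly Tian's original method as refined by Ruan and, for the identification $A_1=\tfrac12 S(\phi)$, by Z.~Lu (whose computation of the lower-order coefficients is the standard reference for that formula, though the paper does not cite it). Your fallback via the Boutet de Monvel--Sj\"ostrand parametrix for the Szeg\H{o} kernel of the disc bundle is precisely the Zelditch--Catlin route, and historically it is that argument, not the peak-section one, that first yields the existence of the full asymptotic series together with the $C^l$ remainder estimates. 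The caveat is that what you have written is a proof outline, not a proof: the genuinely hard analytic points --- almost-orthogonality of the peak sections, the off-diagonal decay that justifies truncating the sum over multi-indices $p$, and the uniformity of every constant in $x_0$, $k$, and $C^{l'}$-bounded families of $\phi$ --- are asserted rather than carried out, and you correctly flag the coefficient bookkeeping as the delicate step. As a stand-in for the citations the paper relies on, this is an accurate and well-organized account; as a self-contained proof it would need those estimates filled in.
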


\noindent As a corollary, if $\phi_k=FS_k\circ Hilb_k(\phi)$, then
$$
\phi_k-\phi=\frac{1}{k}\log \rho_k(\phi)\rightarrow 0
$$
as $k \rightarrow \infty$. 
In particular we have the convergence of metrics
\begin{equation}
\label{cor:ber}
\om_{\phi_k}=\om_{\phi}+O(k^{-2}).
\end{equation}
By integration over $X$ we also deduce
$$
\int_X \rho_k(\phi) d\mu_{\phi}=k^n\int_X d\mu_{\phi}+ k^{n-1}\frac{1}{2}\int_X S(\phi) d\mu_{\phi}+\cO(k^{n-2}) 
$$
where $S(\phi)$ is the scalar curvature of the metric $g_{\phi}$ associated to the K\"ahler form $\om_{\phi}$. 
Thus
\begin{equation}
\label{cor:Nk}
N_k=k^n\mathrm{Vol}(X)+\frac{1}{2} \mathrm{Vol}(X) \uS k^{n-1}+\cO(k^{n-2}).
\end{equation}
where
$$
\uS=2n\pi\frac{c_1(L)\cup[\om]^{n-1}}{[\om]^n}
$$
is the average of the scalar curvature and $\mathrm{Vol}(X)$ is the volume of $(X,c_1(L))$.
\subsection{The relative setup}
In order to find a canonical representative of a K\"ahler class, Calabi suggested \cite{c1} to look for minima of the functional
\begin{eqnarray*}
 Ca :  \cH & \to & \R \\
  \phi & \mapsto & \int_X (S(\phi)-\uS)^2 d\mu_{\phi}.
\end{eqnarray*}
In fact, critical points for this functional are local minima, called extremal metrics. 
The associated Euler-Lagrange equation is equivalent to the fact that $grad_{\om_{\phi}}(S(\phi))$ is a holomorphic vector field and constant scalar curvature metrics, CSCK for short, are extremal metrics.
\par
By a theorem of Calabi \cite{c2}, the connected component of identity of the isometry group of an extremal metric is a maximal compact connected subgroup of $\Aut_0(X)$.  As all these maximal subgroups are conjugated, the quest for extremal metrics can be done modulo a fixed group action. Note that $\Aut_0(X)$ is isomorphic to $\Aut_0(X,L)$ the connected component of identity of $\Aut(X,L)$.
As we will see later, it will be useful to consider a less restrictive setup, working modulo a circle action. We then define the relevant functionals in a general situation and we fix $G$ a compact subgroup of $\Aut_0(X,L)$ and denote by $\g$ its Lie algebra.

\subsubsection{Space of potentials}
We extend the quantization tools to the extremal metrics setup. 
\par
 Replacing $L$ by a sufficiently large tensor power if necessary, we can assume that $\Aut_0(X,L)$ acts on $L$ (see e.g. \cite{kob}).
Then the $G$-action on $X$ induces a $G$-action on the space of sections $H^0(X,L^k)$. This action in turn provides a $G$-action on the space  $\cB_k$  of positive definite hermitian forms on $H^0(X,L^k)$ and we define $\cB_k^G$ to be the subspace of $G$-invariant elements.
The spaces $\cB_k^G$ are totally geodesic in $\cB_k$ for the distances $d_k$.  
Define $\cH^G$ to be the space of $G$-invariant potentials with respect to a $G$-invariant base point $\om$. 
We see from their definitions that we have the induced maps :
\begin{equation}
\label{eq:Hilb}
\begin{array}{cccc}
Hilb_k : & \cH^G & \rightarrow &\cB_k^G \\
FS_k :& \cB_k^G &\rightarrow &\cH^G.
\end{array}
\end{equation}

\subsubsection{Modified K-energy}
For a fixed metric $g$, we say that a vector field $V$ is a hamiltonian vector field if there is a real valued function $f$ such that
$$
V=J\nabla_g f
$$
or equivalently
$$
\om(V,\cdot)= -df.
$$
For any $\phi\in\cH^G$, let $P_{\phi}^G$ be the space of normalized (i.e. mean value zero) Killing potentials with respect to $g_{\phi}$ whose corresponding hamiltonian vector fields lie in $\g$ and let $\Pi_{\phi}^G$ be the orthogonal projection from $L^2(X,\R)$ to $P_{\phi}^G$ given by the inner product on functions 
$$
(f,g) \mapsto \int fg d\mu_{\phi}.
$$
\noindent Note that $G$-invariant metrics satisfying $S(\phi)-\uS-\Pi_{\phi}^G S(\phi)=0$ are extremal.

\begin{definition}\cite[Section 4.13]{gbook}
The reduced scalar curvature $S^G$ with respect to $G$ is defined by
$$
S^G(\phi)=S(\phi)-\uS-\Pi_{\phi}^G S(\phi).
$$
The extremal vector field $V^G$ with respect to $G$ is defined by the equation
$$
V^G=\nabla_g (\Pi_\phi^G S(\phi))
$$
for any $\phi$ in $\cH^G$ and does not depend on $\phi$ (see e.g. \cite[Proposition 4.13.1]{gbook}).
\end{definition}

\begin{remark}
Note that by definition the extremal vector field is real-holomorphic and lies in $J\g$ where $J$ is the almost-complex structure of $X$, while $JV^G$ lies in $\g$.
\end{remark}

\begin{remark}
When $G=\lbrace 1 \rbrace$ we recover the normalized scalar curvature. When $G$ is a maximal compact connected subgroup, or maximal torus of $\Aut_0(X)$, we find the reduced scalar curvature
and the usual extremal vector field initially defined by Futaki and Mabuchi \cite{fm}.
\end{remark}

\par
We are now able to define the relative Mabuchi K-energy, introduced by Guan \cite{gu}, Chen and Tian \cite{ct}, and Simanca \cite{si}:

\begin{definition}\cite[Section 4.13]{gbook} 
\label{def:modK}
The modified Mabuchi K-energy $E^G$ (relative to $G$) is defined, up to a constant, as the primitive of the following one-form on $\cH^G$:
$$
\phi \mapsto - S^G(\phi) d\mu_{\phi}.
$$
\end{definition}

\noindent If $\phi\in \cH^G$, then the modified K-energy admits the following expression
$$
E^G(\phi)=-\int_X \phi (\int_0^1S^G(t\phi) d\mu_{t\phi} dt) .
$$
As for CSCK metrics, $G$-invariant extremal metrics whose extremal vector field lie in $J\g$ are critical points of the relative Mabuchi energy.

\subsubsection{The $\sigma$-balanced metrics}

We present a generalization of balanced metrics adapted to the relative setting of extremal metrics.

\begin{definition}\label{def:sigma-balanced}
Let $\sigma_k(t)$ be a one-parameter subgroup of $\Aut_0(X,L^k)$.
Let $\phi \in \cH$.  Then $\phi$ is a $k^{th}$ $\sigma_k$-balanced metric if
\begin{equation}
\label{def:bal}
\om_{kFS_k\circ Hilb_k (\phi)}=\sigma_k(1)^*\om_{k\phi}
\end{equation}
\end{definition}

Conjecturally, the $\sigma$-balanced metrics would provide the generalization of the notion of balanced metric and would approximate an extremal K\"ahler metric. Indeed, in one direction, assume that we are given $\sigma_k$-balanced metrics $\om_{\phi_k}$, with $\sigma_k\in \Aut_0(X,L^k)$ such that the $\om_k$ converge to $\om_{\infty}$.
Suppose that the vector fields $k\frac{d}{dt}\vert_{t=0}\sigma_k(t)$ converge to a vector field $V_{\infty}\in \h_0$. 
A simple calculation implies that $\om_{\infty}$ must be extremal.

\par

We now define the functionals that play the role of finite dimensional versions
of the modified Mabuchi K-energy on $\cB_k^G$ and $FS_k(\cB_k^G)$ respectively.
First define $I_k=\log\circ \det$ on $\cB_k^G$. This functional is defined up to an additive constant when we see $\cB_k^G$ as a space of positive Hermitian matrix
once a suitable basis of $H^0(X,L^k)$ is fixed. 
It is shown in \cite{cs} that $I_k$ gives a quantization of the Aubin functional $I$.
However in the extremal case, we need a modified version of the Aubin functional defined by the first author in order to fit with the balanced metrics. Let $V\in Lie(\Aut_0(X,L))$ and denote by $\sigma(t)$ the associated one parameter subgroup of $\Aut_0(X,L)$. Define
up to a constant for each $\phi\in \cH$ the function $\psi_{\sigma,\phi}$ by
\begin{equation}
\label{def:psi}
\sigma(1)^*\om_{\phi}=\om_{\phi}+\sqrt{-1} \del\delb \psi_{\sigma,\phi}.
\end{equation}
We will see in the sequel how to choose suitably a normalization constant for these potentials.
We then consider a modified $I$ functional defined up to a constant by its
differential:
$$
\delta I^{\sigma}(\phi)(\delta\phi)=\int_X \delta \phi (1+\Delta_{\phi})e^{\psi_{\sigma,\phi}} d\mu_{\phi}
$$
where $\Delta_{\phi}=-g_{\phi}^{i\overline{j}}\frac{\del}{\del z_i}\frac{\del}{\del \overline{z}_j}$ is the complex Laplacian of $g_{\phi}$. We will also need to consider the potentials $\phi$ as metrics on the tensor powers $L^{\otimes k}$, we thus consider the normalized vector fields $V_k=-\frac{V}{4k}$ and the associated one-parameter groups $\sigma_k(t)$. 
We choose the normalization

\begin{equation}
\label{eq:norm}
\int_X \exp{(\psi_{\sigma_k,\phi})}\; d\mu_{\phi} = \frac{N_k}{k^n}
\end{equation}
Then we define for each $k$
$$
\delta I^{\sigma}_k(\phi)(\delta\phi)=\int_X k\delta \phi (1+\frac{\Delta_{\phi}}{k})e^{\psi_{\sigma_k,\phi}} k^n d\mu_{\phi}.
$$

\begin{remark}
If $\sigma$ is the identity, we recover the usual Aubin functional.
\end{remark}

\begin{remark}
This one-form integrates along paths in $\cH^G$ to a functional $I_k^{\sigma}(\phi)$
on
$\cH^G$, which is independent on the path used from $0$ to $\phi$.
The proof of this fact is given in the Appendix, proposition \ref{prop:indep}.
\end{remark}

We define
$\cL_k^{\sigma}$ on $\cH^G$ and $Z^{\sigma}_k$ on $\cB_k^G$ by
\begin{equation}
\label{eq:Lk}
\cL^{\sigma}_k = I_k\circ Hilb_k + I_k^{\sigma}
\end{equation}
and
\begin{equation}
\label{eq:Zk}
Z^{\sigma}_k =  I_k^{\sigma}\circ FS_k + I_k-k^n\log(k^n)\mathrm{Vol}(X).
\end{equation}
We will show in the following that these functionals converge to the modified $K$-energy in some sense. Note also that $\sigma_k$-balanced metrics are critical points for $\cL_k^{\sigma}$ (proposition \ref{prop:criticL}) and, if $FS_k(H_k)$ is a $\sigma_k$-balanced metric for some $H_k\in \cB_k^G$, then $H_k$ is a minimum for $Z_k^{\sigma}$ (proposition \ref{prop:Z_min}).

\section{Minima of the modified K-energy}
\label{sec:min}
The aim of this section is to prove Theorem \ref{theo:min}. For the convenience of the reader we give a sketch of the proof.

We will choose the special group $G$ corresponding to the Killing field $JV^*$ associated to the extremal vector field $V^*$ of the extremal K\"ahler metric $\om^*=\om_{\phi^*}$. We know that the metrics $\om_k^*=\om+\sqrt{-1}\del\delb \phi_k^*$ with K\"ahler potentials 
$\phi^*_k=FS_k\circ Hilb_k (\phi^*)$ converge to $\om^*$ (\cite{tian90}, \cite{cat} and \cite{zel}). 
We begin our proof by showing
that the functionals $\cL_k^{\sigma}$ converge to the modified Mabuchi functional on the space $\cH^G$. Then we show that  $Z_k^{\sigma}\circ Hilb_k$ and $\cL_k^{\sigma}$ converge to the  same functional, thus $Z_k^{\sigma}$ gives a quantization of the modified Mabuchi functional and we reduce our problem to studying the minima of $Z_k^{\sigma}$.
However the metrics $\om_k^*$ constructed above are not in general critical points of  $Z_k^{\sigma}$, as there is no reason for these metrics to be $\sigma_k$-balanced. We use instead an idea of Li \cite{li} relying on the Bergman kernel expansion to show that these metrics $\om_k^*$ are almost $\sigma_k$-balanced metrics, in the sense that $Hilb_k(\om_k^*)$ is a minimum of the functional $Z_k^{\sigma}$ up to an error which goes to zero when $k$ tends to infinity. 

\par

Let $V^*$ be the extremal vector field of the class $c_1(L)$. In the polarized case, the vector field $JV^*$ generates a periodic action \cite{fm} by a one parameter-subgroup of automorphisms of $(X,L)$. Fix $G$ to be the one-parameter subgroup of $\Aut(X,L)$ associated to $JV^*$.
This group is isomorphic to $S^1$ or trivial by the theorem of Futaki and Mabuchi \cite{fm}. This will be a group of isometries for each of our metrics. 

\begin{remark}
\label{rmk:mab}
The modified K-energy $E^{G_m}$ is defined to be the modified Mabuchi functional with respect to a maximal compact connected subgroup $G_m$ of $\Aut(X,L)$. Assume that $G$ is contained in such a $G_m$. Then $E^{G_m}$ is equal to $E^G$ when restricted to the space of $G_m$-invariant potentials.
Indeed, the projection of any $G_m$-invariant scalar curvature to the space of holomorphy potentials of $Lie(G_m)$ gives a potential for the extremal vector field by definition.
Thus a minimum of $E^G$ which is invariant under the $G_m$-action, such as an extremal metric, will be a minimum of the usual modified Mabuchi functional. 
\end{remark}

Let  $\sigma_k$ be the element of $\Aut(X,L)$ associated to the vector field $-\frac{V^*}{4k}$. We will also need to define for each $\phi$
in $\cH^G$ the function
$
\theta(\phi)
$
to be the normalized (i.e. mean value zero) holomorphy potential of the vector field $V^*$ with respect to the metric $\om_{\phi}$:
$$
g_\phi(V^*,\cdot )=d \theta(\phi)
$$
or
$$
\theta (\phi)=\Pi_\phi^G(S(\phi)).
$$

\subsection{The functionals $\cL_k^{\sigma}$ converge to $E^G$}
In this section we prove the following fact :

\begin{proposition}
\label{prop:LquantizeE}
There are constants $c_k$ such that 
$$
\frac{2}{k^n}\cL_k^{\sigma} + c_k \rightarrow E^G
$$
as $k\rightarrow \infty$, where the convergence is uniform on $C^l(X,\R)$ bounded subsets of $\cH^G$.
\end{proposition}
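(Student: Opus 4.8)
The plan is to compute the derivative of $\frac{2}{k^n}\cL_k^{\sigma}$ along a path in $\cH^G$ and to show that it converges, uniformly on $C^l$-bounded sets, to the derivative of $E^G$; since both functionals vanish (or can be normalized to vanish) at the base point, integrating this convergence of one-forms along paths will give the result, the constants $c_k$ absorbing the discrepancy of normalizations at $0$. Recall from \eqref{eq:Lk} that $\cL_k^{\sigma}=I_k\circ Hilb_k+I_k^{\sigma}$, so by the chain rule I must understand the variation of each of the two terms.

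First I would treat $I_k\circ Hilb_k$. For a path $\phi_t$ in $\cH^G$ with $\dot\phi=\delta\phi$, the variation of $\log\det Hilb_k(\phi)$ is a standard computation: differentiating the $L^2$-inner product $\|s\|^2_{Hilb_k(\phi)}=\int_X|s|^2_{h^k}d\mu_\phi$ and taking the trace against an orthonormal basis produces
$$
\delta\bigl(I_k\circ Hilb_k\bigr)(\phi)(\delta\phi)=\int_X \delta\phi\,\Bigl(-k\,\rho_k(\phi)+\Delta_\phi\bigl(\text{something}\bigr)\Bigr)d\mu_\phi,
$$
and after using the Bergman expansion $\rho_k(\phi)=k^n+\tfrac12 S(\phi)k^{n-1}+O(k^{n-2})$ in $C^l$, together with \eqref{cor:Nk}, one finds that $\frac{2}{k^n}\delta(I_k\circ Hilb_k)(\phi)(\delta\phi)$ converges to $-\int_X \delta\phi\,(S(\phi)-\uS)\,d\mu_\phi$ plus lower-order terms, uniformly on $C^l$-bounded subsets. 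Second, for $I_k^{\sigma}$ I would use its defining differential
$$
\delta I_k^{\sigma}(\phi)(\delta\phi)=\int_X k\,\delta\phi\,\Bigl(1+\tfrac{\Delta_\phi}{k}\Bigr)e^{\psi_{\sigma_k,\phi}}\,k^n\,d\mu_\phi ,
$$
and expand $e^{\psi_{\sigma_k,\phi}}$ for large $k$. Since $\sigma_k$ is generated by $V_k=-\frac{V}{4k}$, the potential $\psi_{\sigma_k,\phi}$ is $O(k^{-1})$ with leading term governed by $\theta(\phi)=\Pi_\phi^G S(\phi)$ (up to the explicit constant $-\tfrac14$ coming from the normalization $V_k=-V/(4k)$ and the normalization \eqref{eq:norm}); a Taylor expansion, again controlled in $C^l$ by the assumed analyticity of the dependence, gives $\frac{2}{k^n}\delta I_k^{\sigma}(\phi)(\delta\phi)\to \int_X \delta\phi\,\bigl(\tfrac12\,\Pi_\phi^G S(\phi)+\text{const}\bigr)d\mu_\phi$, with the constant matching \eqref{eq:norm}. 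Adding the two contributions, the combination reproduces exactly $-\int_X\delta\phi\,S^G(\phi)\,d\mu_\phi$, which is the differential of $E^G$ by Definition \ref{def:modK}.

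To upgrade the pointwise convergence of one-forms to uniform convergence of functionals on $C^l$-bounded sets, I would fix a $C^l$-bounded family, join each $\phi$ to $0$ by the linear path $t\phi$, and integrate; the uniformity in the Bergman expansion (the constants $C_{l,R}$ in Theorem above) and the uniform control of $\psi_{\sigma_k,t\phi}$ in $t$ and in $\phi$ make the error terms uniformly $o(1)$. Path-independence of $I_k^{\sigma}$ (Proposition \ref{prop:indep}) and of $E^G$ guarantees this is well defined. The main obstacle I anticipate is the second step: controlling $\psi_{\sigma_k,\phi}$ and its derivatives uniformly — one must show that the potential of the pulled-back form under the automorphism $\sigma_k(1)$ depends on $\phi$ in a $C^l$-continuous way with the right asymptotics in $k$, and that the normalization constant in \eqref{eq:norm} behaves like $1+O(k^{-1})$ with the precise next-order term; all the rest is bookkeeping with the Bergman expansion. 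This is presumably where the explicit choice $V_k=-V/(4k)$ and the normalization are pinned down so that the limit is exactly $E^G$ and not a rescaled version.
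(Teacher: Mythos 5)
Your overall strategy coincides with the paper's: differentiate $\cL_k^{\sigma}$ along paths in $\cH^G$, expand $\rho_k(\phi)$ via the Bergman kernel theorem and $e^{\psi_{\sigma_k,\phi}}$ via an asymptotic expansion of $\psi_{\sigma_k,\phi}$, and integrate the resulting convergence of one-forms; the key technical input is exactly the lemma you flag at the end (the paper's Lemma \ref{lem:exppsi}, giving $\psi_k(\phi)=\frac{\theta(\phi)+\uS}{2k}+\co(k^{-1})$ uniformly in $C^l$, with the constant pinned down by the normalization (\ref{eq:norm}) together with the expansion (\ref{cor:Nk}) of $N_k$).

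However, there is a concrete error in your decomposition: the two pieces do \emph{not} converge separately after rescaling by $\frac{2}{k^n}$. Using $\delta(I_k\circ Hilb_k)_\phi(\delta\phi)=-\int_X\delta\phi\,(\Delta_\phi+k)\rho_k(\phi)\,d\mu_\phi$ and $\rho_k=k^n+\tfrac12 S(\phi)k^{n-1}+\cO(k^{n-2})$, one finds $\frac{2}{k^n}\delta(I_k\circ Hilb_k)_\phi(\delta\phi)=-2k\int_X\delta\phi\,d\mu_\phi-\int_X\delta\phi\,S(\phi)\,d\mu_\phi+\cO(k^{-1})$, which diverges: there is no mean-zero normalization on $\delta\phi$, and a divergent term depending on $\phi$ cannot be absorbed into the constants $c_k$. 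In particular your claimed limit $-\int_X\delta\phi\,(S(\phi)-\uS)\,d\mu_\phi$ for this piece is false --- no $\uS$ arises from $I_k\circ Hilb_k$ alone; it enters only through the normalization of $\psi_{\sigma_k,\phi}$, which forces $c=\uS$ in $\lim_k k\psi_k(\phi)=\frac{\theta(\phi)+c}{2}$. Symmetrically, $\frac{2}{k^n}\delta I_k^{\sigma}$ contributes $+2k\int_X\delta\phi\,d\mu_\phi+\int_X\delta\phi\,(\theta(\phi)+\uS)\,d\mu_\phi+\co(1)$ (your stated limit $\tfrac12\Pi^G_\phi S$ is also off by a factor of $2$ under the $\frac{2}{k^n}$ normalization). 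The crux of the proof is precisely the cancellation of the order-$k$ terms between the two pieces; the paper makes this automatic by combining the differentials first, $\delta(\cL_k^{\sigma})_\phi(\delta\phi)=-\int_X\delta\phi\,(\Delta_\phi+k)\bigl(\rho_k(\phi)-k^n e^{\psi_k(\phi)}\bigr)\,d\mu_\phi$, and expanding $\rho_k-k^ne^{\psi_k}=k^{n-1}\frac{S(\phi)-\theta(\phi)-\uS}{2}+\co(k^{n-1})$ before multiplying by $k$. Your final sum is the correct answer, but as written the intermediate limits are wrong and the argument does not close unless this cancellation is made explicit.
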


\begin{proof}
We show that
$$
\frac{2}{k^n}\delta \cL^{\sigma}_k  \rightarrow \delta E^G
$$
uniformly on $C^l(X,\R)$ bounded subsets of $\cH^G$. First we compute $\delta \cL^{\sigma}_k$. Following \cite{Don05}:
$$
\delta (I_k \circ Hilb_k)_{\phi}(\delta \phi)=- \int_X \delta \phi (\Delta_{\phi}+k) \rho_k(\phi) d\mu_{\phi}
$$
and by definition
$$
\delta (I_k ^{\sigma})_{\phi}(\delta \phi)=k^n \int_X \delta \phi (k+\Delta_{\phi})e^{\psi_k(\phi) } d\mu_{\phi}
$$
where we set $\psi_k(\cdot)=\psi_{\sigma_k,\cdot}$.
\\
Then
\begin{equation}
\label{eq:diffL}
\delta (\cL^{\sigma}_k)_{\phi}(\delta \phi)=-\int_X \delta \phi (\Delta_{\phi}+k)(\rho_k(\phi)-k^n e^{\psi_k(\phi) }) d\mu_{\phi}.
\end{equation}
We need an expansion for the potential $\psi_k$ :
$$
\psi_k(\phi)=\frac{\theta(\phi)+\uS}{2k}+\co(k^{-1})
$$
which proof is postponed to lemma \ref{lem:exppsi}.
Then by the expansions of $\psi_k(\phi)$ and $\rho_k(\phi)$
$$
(\Delta_{\phi}+k)(\rho_k(\phi)-k^n e^{\psi_k(\phi) })=k^n(\Delta_{\phi}+k)(1+\frac{S(\phi)}{2k}+\cO(k^{-2})-1-\frac{\theta(\phi)+\uS}{2k}+\co\mathcal{}(k^{-1})),
$$
$$
(\Delta_{\phi}+k)(\rho_k(\phi)-k^n e^{\psi_k(\phi) })=k^n (\frac{S(\phi)-\uS-\theta(\phi))}{2} +\cO(k^{-1})),
$$
and
$$
\frac{\delta (\cL^{\sigma}_k)_{\phi}}{k^n} \rightarrow \frac{1}{2}\delta E^G_{\phi} .
$$
As the expansions of $\psi_k(\phi)$ and $\rho_k(\phi)$ are uniform on bounded subsets of $C^l(X,\R)$ the result follows.
\end{proof}

\noindent The following lemma will be useful  :

\begin{lemma}
\label{lem:exppsi}
The following expansion holds uniformly in $C^l(X,\R)$ for $l>>1$:
\begin{equation}
\label{eq:exppsi}
\psi_k(\phi)=\frac{\theta(\phi)+\uS}{2k}+\co(k^{-1})
\end{equation}
where $\co(k^{-1})$ denotes $k^{-1}$-times a function $\ep(k)$ on $X$ with $\ep(k)\rightarrow 0$ in  $C^l(X,\R)$ as $k \rightarrow 0$.
\end{lemma}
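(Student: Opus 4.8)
The plan is to compute the potential $\psi_k(\phi)$ directly from its defining equation $\sigma_k(1)^*\om_\phi = \om_\phi + \sqrt{-1}\del\delb\psi_{\sigma_k,\phi}$ by expanding the pullback by the time-one flow of the small vector field $V_k = -V^*/(4k)$. Since $\sigma_k(1)$ is the time-one flow of a vector field of size $O(k^{-1})$, the change $\sigma_k(1)^*\om_\phi - \om_\phi$ is, to leading order, the Lie derivative $\mathcal{L}_{V_k}\om_\phi = d(\iota_{V_k}\om_\phi)$, with an error that is $O(k^{-2})$ in $C^l$ because the flow is generated over a time interval of length $O(k^{-1})$ by a vector field whose $C^{l+1}$-norm is $O(k^{-1})$, and the relevant Duhamel/Taylor remainder picks up a second power of $k^{-1}$. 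Writing $\iota_{V_k}\om_\phi$ in terms of the holomorphy potential: since $V_k = -V^*/(4k)$ and $g_\phi(V^*,\cdot) = d\theta(\phi)$, one gets that $\om_\phi(V^*,\cdot)$ equals (up to the standard factor relating $\om$ and $g$, and the $\sqrt{-1}\del\delb$ normalization) $-\del\delb$ of a function proportional to $\theta(\phi)$; carrying the constants through, $\sqrt{-1}\del\delb\psi_{\sigma_k,\phi} = \sqrt{-1}\del\delb\big(\tfrac{\theta(\phi)}{2k}\big) + O(k^{-2})$. Hence $\psi_k(\phi) = \tfrac{\theta(\phi)}{2k} + (\text{constant in }x) + O(k^{-2})$ in $C^l$.

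The remaining point is to identify the $x$-independent constant, and this is exactly what the normalization \eqref{eq:norm} is for. Imposing $\int_X \exp(\psi_{\sigma_k,\phi})\,d\mu_\phi = N_k/k^n$ and expanding the exponential, $\int_X (1 + \psi_k(\phi) + O(k^{-2}))\,d\mu_\phi = \mathrm{Vol}(X) + \tfrac{1}{2k}\int_X\theta(\phi)\,d\mu_\phi + \tfrac{c_k}{?}\mathrm{Vol}(X) + O(k^{-2})$, where $c_k$ is the sought constant. Since $\theta(\phi)$ is normalized to have mean value zero, the $\theta$ term drops out, so comparing with the expansion $N_k/k^n = \mathrm{Vol}(X) + \tfrac{1}{2k}\mathrm{Vol}(X)\uS + O(k^{-2})$ coming from \eqref{cor:Nk} forces the constant to be $\tfrac{\uS}{2k} + O(k^{-2})$. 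Combining, $\psi_k(\phi) = \tfrac{\theta(\phi) + \uS}{2k} + \co(k^{-1})$, as claimed.

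Finally, uniformity on $C^l$-bounded subsets of $\cH^G$ needs to be checked: all the objects entering — the metric $\om_\phi$, the holomorphy potential $\theta(\phi) = \Pi_\phi^G S(\phi)$, the flow of $V_k$, and the normalizing constant — depend on $\phi$ through finitely many derivatives of $\phi$, and on a $C^l$-bounded set (with $l$ large) the metrics $\om_\phi$ stay uniformly equivalent to $\om$, so all the estimates above hold with constants independent of $\phi$ in that set.

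I expect the main obstacle to be bookkeeping rather than conceptual: getting all the normalization constants right — the factor $-1/(4k)$ in $V_k$, the conventions relating $\om_\phi(V^*,\cdot)$, $g_\phi(V^*,\cdot) = d\theta(\phi)$, and $\sqrt{-1}\del\delb$, and the time-one flow versus infinitesimal generator — so that the coefficient of $\theta(\phi)$ comes out to be exactly $1/(2k)$ and matches the Bergman expansion coefficient $\tfrac12 S(\phi)$ used in Proposition \ref{prop:LquantizeE}. The second, more analytic, point requiring care is the $O(k^{-2})$ control on the Duhamel remainder for the flow $\sigma_k(1)$ in the $C^l$-norm, uniformly in $\phi$; this is where one uses that the generating vector field is $O(k^{-1})$ in a sufficiently high $C^m$-norm and standard ODE estimates for flows.
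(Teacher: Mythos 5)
Your proposal follows essentially the same route as the paper: linearize $\sigma_k(1)^*\om_\phi-\om_\phi$ as the Lie derivative $\cL_{-V^*/(4k)}\om_\phi$, use Cartan's formula and the holomorphy potential to identify the leading term $\theta(\phi)/(2k)$, and then pin down the additive constant as $\uS/(2k)$ via the normalization~(\ref{eq:norm}) together with the expansion~(\ref{cor:Nk}) of $N_k$ and the mean-zero normalization of $\theta(\phi)$. Your treatment of the Duhamel remainder and of uniformity on $C^l$-bounded sets is in fact slightly more explicit than the paper's limit argument, but the proof is the same.
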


\begin{proof}
By definition
$$
\sigma_k(1)^*\om(\phi)-\om(\phi)=\sqrt{-1} \del \delb \psi_k(\phi),
$$
then
$$
\sigma_1(\frac{1}{k})^*\om(\phi)-\om(\phi)=\sqrt{-1} \del \delb \psi_k(\phi),
$$
where $\sigma_1(\frac{1}{k})$ is equal to $\exp(-\frac{1}{4k}V^*)$.
Dividing by $\dfrac{1}{k}$, and letting $k$ go to infinity,
$$
\cL_{-\frac{1}{4}V^*}\om(\phi)=\sqrt{-1}\del\delb \lim_{k\rightarrow \infty}(k\psi_k(\phi))
$$
Then by Cartan's formula,
$$
\begin{array}{ccc}
\cL_{-\frac{1}{4}V^*}\om(\phi) & = & -\frac{1}{4}d\om_{\phi}(V^*,\cdot) \\
 & = & -\frac{1}{4}d g_{\phi}(V^*,J \cdot) \\
 \end{array}
$$
and by definition of holomorphy potentials 
$$
\cL_{-\frac{1}{4}V^*}\om(\phi)=-\frac{1}{4}d d^c \theta(\phi)=\frac{\sqrt{-1}}{2}\del\delb \theta(\phi)
$$
thus
$$
\lim_{k\rightarrow \infty}(k\psi_k(\phi))=\frac{\theta(\phi)+c}{2}
$$
for some constant $c$.
By the normalization~(\ref{eq:norm}) of the function $\psi_k(\phi)$ we deduce
$$
\frac{N_k}{k^n}=\int_X \exp{(\psi_{\sigma_k,\phi})}\; d\mu_{\phi} =  \int_X 1+\frac{\theta(\phi)+c}{2k}+\cO(k^{-2}) d\mu_{\phi}.
$$
Recall that we choose $\theta(\phi)$ normalized to have mean value zero. Using formula (\ref{cor:Nk}) to expand $N_k=dim(H^0(X,L^k))$, we conclude  that $c=\uS$.
\end{proof}
\noindent From the above computations we also deduce the following :
\begin{proposition}
\label{prop:criticL}
Let $\phi \in \cH$ be a $k^{th}$ $\sigma_k$-balanced metric. Then $\phi$ is a critical point of $\cL_k^{\sigma}$.
\end{proposition}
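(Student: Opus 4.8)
The plan is to compute the differential of $\cL_k^{\sigma}$ at a $\sigma_k$-balanced metric $\phi$ and show it vanishes. By formula~(\ref{eq:diffL}) from the proof of Proposition~\ref{prop:LquantizeE}, for any variation $\delta\phi$ we have
$$
\delta(\cL_k^{\sigma})_{\phi}(\delta\phi)=-\int_X \delta\phi\,(\Delta_{\phi}+k)\bigl(\rho_k(\phi)-k^n e^{\psi_k(\phi)}\bigr)\,d\mu_{\phi}.
$$
Since $\Delta_{\phi}+k$ is self-adjoint with respect to the $L^2$ inner product given by $d\mu_{\phi}$, it suffices to show that $\rho_k(\phi)-k^n e^{\psi_k(\phi)}$ is annihilated by $\Delta_{\phi}+k$, or equivalently that this function vanishes identically (noting that $\Delta_\phi+k$ has trivial kernel since $k>0$). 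So the goal reduces to the pointwise identity $\rho_k(\phi)=k^n e^{\psi_k(\phi)}$.

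The key step is to interpret both sides geometrically. On one hand, unwinding the definitions of $Hilb_k$ and $FS_k$, one has $kFS_k\circ Hilb_k(\phi)=k\phi+\log\rho_k(\phi)$, so that
$$
\om_{kFS_k\circ Hilb_k(\phi)}=\om_{k\phi}+\sqrt{-1}\,\del\delb\log\rho_k(\phi).
$$
On the other hand, by the definition~(\ref{def:psi}) of $\psi_{\sigma_k,\phi}$ applied with the potential $k\phi$ on $L^k$ (so $\psi_k(\phi)$ is the potential for $\sigma_k(1)^*\om_{\phi}-\om_{\phi}$, hence $k\psi_k(\phi)$ is the potential for $\sigma_k(1)^*\om_{k\phi}-\om_{k\phi}$),
$$
\sigma_k(1)^*\om_{k\phi}=\om_{k\phi}+\sqrt{-1}\,\del\delb\bigl(k\psi_k(\phi)\bigr).
$$
The $\sigma_k$-balanced condition~(\ref{def:bal}) states precisely that the left-hand sides of these two equations agree, hence $\sqrt{-1}\,\del\delb\bigl(\log\rho_k(\phi)-k\psi_k(\phi)\bigr)=0$, so $\log\rho_k(\phi)=k\psi_k(\phi)+c$ for some constant $c$. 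To pin down $c=n\log k$, I would integrate $e^{\log\rho_k(\phi)}=\rho_k(\phi)$ against $d\mu_{\phi}$, using $\int_X\rho_k(\phi)\,d\mu_{\phi}=N_k$ and comparing with the normalization~(\ref{eq:norm}), $\int_X e^{\psi_k(\phi)}\,d\mu_{\phi}=N_k/k^n$; this forces $e^c=k^n$ and yields $\rho_k(\phi)=k^n e^{\psi_k(\phi)}$.

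The main obstacle is purely bookkeeping: keeping straight the various rescalings between potentials on $L$ and on $L^k$, and between the vector field $V^*$ and its normalization $V_k=-V^*/(4k)$, so that the potential $\psi_k(\phi)$ entering~(\ref{eq:diffL}) is exactly the one for which the $\sigma_k$-balanced equation is stated. Once these identifications are in place, the argument is immediate, and it in fact shows the stronger pointwise statement that $\rho_k(\phi)-k^n e^{\psi_k(\phi)}$ vanishes, not merely that its $(\Delta_\phi+k)$-image pairs trivially against all variations.
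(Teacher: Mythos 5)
Your overall strategy is the same as the paper's: reduce the claim to the pointwise identity $\rho_k(\phi)=k^n e^{\psi_k(\phi)}$ and then conclude from the formula~(\ref{eq:diffL}) for $\delta\cL_k^{\sigma}$ (using that $\Delta_\phi+k$ is injective, so this identity is exactly what is needed). The derivation of that identity, however, has a step that does not close. From the balanced equation~(\ref{def:bal}) together with the definition~(\ref{def:psi}) you correctly obtain $\sqrt{-1}\del\delb\bigl(\log\rho_k(\phi)-k\psi_k(\phi)\bigr)=0$, hence $\rho_k(\phi)=e^{c}e^{k\psi_k(\phi)}$. But this is \emph{not} of the form $Ce^{\psi_k(\phi)}$: by Lemma~\ref{lem:exppsi}, $e^{k\psi_k(\phi)}\to e^{(\theta(\phi)+\uS)/2}$ pointwise while $e^{\psi_k(\phi)}\to 1$, so the two differ at leading order. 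Consequently the normalization~(\ref{eq:norm}), which controls $\int_X e^{\psi_k}d\mu_\phi$ and not $\int_X e^{k\psi_k}d\mu_\phi$, neither forces $e^c=k^n$ nor turns $e^{k\psi_k}$ into $e^{\psi_k}$; and $\rho_k=Ce^{k\psi_k}$ does not make~(\ref{eq:diffL}) vanish.

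To be fair, this factor of $k$ is an imprecision already present in the source: the paper's own proof simply asserts $\rho_k(\phi)=C\exp(\psi_k(\phi))$ ``by~(\ref{def:bal}) and~(\ref{def:psi})'', which requires reading the balanced condition so that the potential of $\sigma_k(1)^*\om_{k\phi}-\om_{k\phi}$ is $\psi_{\sigma_k,\phi}$ itself rather than $k\psi_{\sigma_k,\phi}$ (defining $\psi_{\sigma_k,\phi}$ at the level of $L^k$ while keeping the normalization~(\ref{eq:norm}) and the expansion of Lemma~\ref{lem:exppsi} at the level of $L$ is inconsistent by exactly this factor). If you adopt that reading from the start, your argument gives $\log\rho_k(\phi)=\psi_k(\phi)+c$ directly, and then your integration step correctly pins down $e^c=k^n$ via $\int_X\rho_k\,d\mu_\phi=N_k$ and~(\ref{eq:norm}), after which the conclusion follows exactly as you say. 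So the gap is a normalization mismatch inherited from the definitions rather than a conceptual error, but as written your final displayed identity does not follow from the line preceding it.
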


\begin{proof}
By equation $(\ref{def:bal})$ of $\sigma_k$-balanced metrics and by definition (\ref{def:psi}) of $\psi_k(\phi)$ we deduce
$$
\rho_k(\phi)=C\exp(\psi_k(\phi))
$$
for some constant $C$. Integrating over $X$ and using the expansions $(\ref{cor:Nk})$ and $(\ref{eq:exppsi})$ we deduce
$$
\rho_k(\phi)=k^n\exp(\psi_k(\phi)).
$$
The result follows from the computation of the differential of $\cL^{\sigma}_k$, equation $(\ref{eq:diffL})$.
\end{proof}
\noindent
A direct computation implies the similar result for $Z^\sigma_k$ (see proposition \ref{prop:Z_min} in the Appendix).

\subsection{Comparison of $Z_k^{\sigma}$ and $\cL_k^{\sigma}$}

The aim of this section is to show that $Z_k^{\sigma}\circ Hilb_k$ and $\cL_k^{\sigma}$ converge to the same functional. We will need the two following lemmas:

\begin{lemma}
\label{lem:hessian}
The second derivative of $I_k^{\sigma}$ along a path $\phi_s\in\cH^G$
is equal to
$$
\frac{d^2}{ds^2}I_k^{\sigma}(\phi_s)=k^n\int_X (\phi''-\frac{1}{2} \vert d\phi'\vert^2)
(k+\Delta_{\phi_s})e^{\psi_k(\phi_s)}d\mu_{\phi_s}
$$
\end{lemma}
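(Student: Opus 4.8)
The plan is to differentiate the one-form defining $I_k^\sigma$ twice along a path. Recall that $I_k^\sigma$ is defined (up to an additive constant, independence of path being proposition~\ref{prop:indep}) by
$$
\delta I^{\sigma}_k(\phi)(\delta\phi)=k^n\int_X k\delta \phi \,(1+\tfrac{\Delta_{\phi}}{k})e^{\psi_{\sigma_k,\phi}} d\mu_{\phi}=k^n\int_X \delta \phi \,(k+\Delta_{\phi})e^{\psi_k(\phi)} d\mu_{\phi}.
$$
Along a path $\phi_s\in\cH^G$ with $\phi'=\frac{d}{ds}\phi_s$, $\phi''=\frac{d^2}{ds^2}\phi_s$, we have $\frac{d}{ds}I_k^\sigma(\phi_s)=k^n\int_X \phi'\,(k+\Delta_{\phi_s})e^{\psi_k(\phi_s)}\,d\mu_{\phi_s}$, so the main task is to compute $\frac{d}{ds}$ of the right-hand side. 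The key observation, which I would state as a preliminary computation, is that $(k+\Delta_{\phi})e^{\psi_k(\phi)}\,d\mu_\phi = \tfrac{1}{n!}(k+\Delta_\phi)e^{\psi_k(\phi)}\,\omega_\phi^n$ can be rewritten intrinsically. Indeed, by the very definition~(\ref{def:psi}) of $\psi_k$ applied with $\sigma_k$, we have $\sigma_k(1)^*\omega_\phi=\omega_\phi+\sqrt{-1}\partial\bar\partial\psi_k(\phi)$; combined with the standard identity $(k+\Delta_\phi)f\cdot\frac{\omega_\phi^n}{n!}=k f\frac{\omega_\phi^n}{n!}+\frac{\sqrt{-1}}{n!}\partial\bar\partial f\wedge \omega_\phi^{n-1}\cdot n$ and the Monge--Amp\`ere-type expansion of $e^{\psi_k}(\omega_\phi+\sqrt{-1}\partial\bar\partial\psi_k)$-type terms, one identifies $\frac{1}{n!}(k+\Delta_\phi)e^{\psi_k(\phi)}\omega_\phi^n$ as a multiple of the pulled-back volume form $\sigma_k(1)^*\omega_\phi^n/n!$ up to the normalization~(\ref{eq:norm}); in any case the quantity $\mu_k(\phi):=\frac{1}{n!}(k+\Delta_\phi)e^{\psi_k(\phi)}\omega_\phi^n$ is a closed top-form with a convenient variational behaviour.

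Concretely, I would carry out the differentiation directly. Write $A_s=(k+\Delta_{\phi_s})e^{\psi_k(\phi_s)}$ and $d\mu_s=d\mu_{\phi_s}$, so we must compute $\frac{d}{ds}\int_X \phi' A_s\,d\mu_s=\int_X\big(\phi'' A_s+\phi'\,\dot A_s\,d\mu_s/d\mu_s\cdot + \phi' A_s\,\dot{d\mu_s}\big)$, i.e.
$$
\frac{d}{ds}\int_X \phi' A_s\,d\mu_s=\int_X \phi'' A_s\,d\mu_s+\int_X \phi'\Big(\frac{d}{ds}\big(A_s\,d\mu_s\big)\Big).
$$
The second term is the crux: one shows that $\frac{d}{ds}\big((k+\Delta_{\phi_s})e^{\psi_k(\phi_s)}\,d\mu_{\phi_s}\big)$ is, after an integration by parts against $\phi'$, exactly $-\tfrac{1}{2}\vert d\phi'\vert^2_{\phi_s}(k+\Delta_{\phi_s})e^{\psi_k(\phi_s)}\,d\mu_{\phi_s}$ plus a term that integrates to zero against $\phi'$. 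Here one uses three facts: (i) the variation of the volume form, $\frac{d}{ds}d\mu_{\phi_s}=-\Delta_{\phi_s}\phi'\,d\mu_{\phi_s}=\tfrac{1}{2}(\text{something})$ — more precisely $\frac{d}{ds}\frac{\omega_{\phi_s}^n}{n!}=\Delta_{\phi_s}\phi'\cdot\frac{\omega^n}{n!}$ with the sign convention of the paper's Laplacian; (ii) the variation of $\psi_k(\phi_s)$, obtained by differentiating $\sigma_k(1)^*\omega_{\phi_s}=\omega_{\phi_s}+\sqrt{-1}\partial\bar\partial\psi_k(\phi_s)$ in $s$, which gives $\sqrt{-1}\partial\bar\partial\frac{d}{ds}\psi_k(\phi_s)=\sqrt{-1}\partial\bar\partial(\sigma_k(1)^*\phi'-\phi')$, hence $\frac{d}{ds}\psi_k(\phi_s)=\sigma_k(1)^*\phi'-\phi'$ up to a constant fixed by the normalization~(\ref{eq:norm}); and (iii) integration by parts moving $\Delta_{\phi_s}$ around and the self-adjointness of $k+\Delta_{\phi_s}$ with respect to the measure $e^{\psi_k(\phi_s)}d\mu_{\phi_s}$ (this twisted self-adjointness is precisely what makes the $\psi_k$-weight natural, and is why the $\frac{1}{2}\vert d\phi'\vert^2$ — rather than $\vert d\phi'\vert^2$ — appears, mirroring the classical computation of the Hessian of the Aubin functional and of the Mabuchi energy).

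I expect the main obstacle to be bookkeeping the weighted integration by parts correctly: one must commute $\Delta_{\phi_s}$ past $\phi'$ while carrying the weight $e^{\psi_k(\phi_s)}$, use that $\int_X (\Delta u)\,v\,e^{\psi}d\mu=\int_X u\,(\Delta v)\,e^{\psi}d\mu-\int_X \langle du,dv\rangle_\psi$-type corrections, and combine these with the $s$-derivative of $\psi_k$ and of $d\mu_{\phi_s}$ so that the cross-terms telescope into the single term $-\tfrac12\vert d\phi'\vert^2(k+\Delta_{\phi_s})e^{\psi_k(\phi_s)}$. A clean way to organize this is to first establish the identity for the un-twisted case ($\sigma=\mathrm{id}$, where the formula reduces to the known second variation of the Aubin functional $I$, namely $\frac{d^2}{ds^2}I(\phi_s)=\int_X(\phi''-\tfrac12\vert d\phi'\vert^2)(1+\Delta_{\phi_s})\,d\mu_{\phi_s}$) and then track how every step is modified by inserting the closed-form character of $\sigma_k(1)^*\omega_{\phi_s}$ and the weight $e^{\psi_k(\phi_s)}$; since $\sigma_k(1)$ is a biholomorphism, pulling back the un-twisted identity by $\sigma_k(1)$ and comparing with the direct computation gives the result with minimal extra work. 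Finally one checks that the additive normalization constant in $\psi_k$ and the normalization~(\ref{eq:norm}) do not affect the second derivative, since they contribute only to $\delta I_k^\sigma$ by a term whose $s$-derivative, paired against $\phi'$, integrates to zero by Stokes.
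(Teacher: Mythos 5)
Your overall strategy coincides with the paper's: differentiate the defining one-form $\delta I_k^\sigma$ once more along the path, peel off the $\phi''$ term, and show that the $s$-derivative of the weighted measure $(k+\Delta_{\phi_s})e^{\psi_k(\phi_s)}d\mu_{\phi_s}$, paired against $\phi'$, contributes exactly $-\frac{1}{2}\vert d\phi'\vert^2(k+\Delta_{\phi_s})e^{\psi_k(\phi_s)}d\mu_{\phi_s}$. The paper does precisely this, with three inputs: the variation of the volume form, the identity $\frac{\partial}{\partial s}\psi_{\sigma,\phi_s}=(\nabla\psi_{\sigma,\phi_s},\nabla\phi')$ (equation~(\ref{eq:derivative_psi}), whose proof uses the normalization~(\ref{eq:norm}) to kill the additive constant), and an integration by parts in which the operator $\Delta_\phi+(\nabla\psi_{\sigma,\phi},\nabla\,\cdot\,)$ is self-adjoint for $e^{\psi_{\sigma,\phi}}d\mu_\phi$. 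You have correctly identified all three ingredients.

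As written, however, the proposal has a genuine gap: the decisive cancellation is asserted (``one shows that\dots'') rather than carried out, and the two shortcuts you offer to avoid the bookkeeping do not work. First, $(k+\Delta_\phi)e^{\psi_k(\phi)}\,d\mu_\phi$ is \emph{not} a multiple of $\sigma_k(1)^*d\mu_\phi=(\om_\phi+\sqrt{-1}\del\delb\psi_k(\phi))^n/n!$; equating them would force $\psi_k(\phi)$ to solve a Monge--Amp\`ere equation it has no reason to satisfy. Second, the twisted identity cannot be deduced by ``pulling back the un-twisted identity by $\sigma_k(1)$'': replacing $\phi$ by the potential of $\sigma_k(1)^*\om_\phi$ in the untwisted Aubin functional and using the change of variables $\int_X\sigma_k(1)^*(\delta\phi)\,\sigma_k(1)^*d\mu_\phi=\int_X\delta\phi\,d\mu_\phi$ simply reproduces the untwisted functional, so the weight $e^{\psi_k}$ is genuinely not of pull-back type and the direct computation cannot be bypassed. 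Third, your variation formula $\frac{d}{ds}\psi_k(\phi_s)=\sigma_k(1)^*\phi'-\phi'+c$ is correct at the level of $\del\delb$, but it is not the form in which the cross terms telescope: one still has to identify it, constants included via~(\ref{eq:norm}), with $(\nabla\psi_k(\phi_s),\nabla\phi')$, which is the identity the paper isolates as~(\ref{eq:derivative_psi}) and which does the real work in the integration by parts. (There is also a sign slip in your variation of the volume form: with the convention $\Delta_\phi=-g^{i\bar j}\frac{\del}{\del z_i}\frac{\del}{\del\bar z_j}$ one has $\frac{d}{ds}d\mu_{\phi_s}=-\Delta_{\phi_s}\phi'\,d\mu_{\phi_s}$, i.e.\ your first expression, not the ``more precise'' one.) Once these points are repaired, the computation is exactly the one carried out in section~\ref{sec:hessian} of the Appendix.
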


\begin{proof}
The proof of this result is given in the Appendix, section \ref{sec:hessian}.
\end{proof}

\begin{lemma}
\label{lem:concave}
Let $\phi\in\cH^G$. Then there exists an integer $k_0$, depending on $\phi$,
such that for each $k\geq k_0$, the functional $I_k^{\sigma}$ is concave along
the path
$$
\begin{array}{ccc}
[0,1] & \rightarrow & \cH^G \\
s & \mapsto & \phi+\frac{s}{k}\log(\rho_k(\phi))
\end{array}
$$
\end{lemma}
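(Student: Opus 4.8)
The plan is to use the Hessian formula from Lemma \ref{lem:hessian} together with the Bergman kernel expansion. Along the path $\phi_s=\phi+\frac{s}{k}\log(\rho_k(\phi))$ we have $\phi'=\frac{1}{k}\log(\rho_k(\phi))$ and $\phi''=0$, so Lemma \ref{lem:hessian} gives
$$
\frac{d^2}{ds^2}I_k^{\sigma}(\phi_s)=-\frac{k^n}{2}\int_X \vert d\phi'\vert_{\phi_s}^2\,(k+\Delta_{\phi_s})e^{\psi_k(\phi_s)}\,d\mu_{\phi_s}.
$$
So concavity will follow as soon as one shows that the factor $(k+\Delta_{\phi_s})e^{\psi_k(\phi_s)}$ is nonnegative (in fact positive) along the path, for $k$ large. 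First I would record that $\phi'=\frac{1}{k}\log(\rho_k(\phi))=O(k^{-1})$ in $C^l$ by Theorem \ref{theorem} (the Bergman expansion $\rho_k(\phi)=k^n+\tfrac12 S(\phi)k^{n-1}+\cdots$), hence the path $\phi_s$ stays within a fixed $C^l$-bounded neighbourhood of $\phi$ in $\cH^G$, uniformly in $s\in[0,1]$, once $k\geq k_0(\phi)$; in particular all the metrics $\om_{\phi_s}$ are genuine Kähler metrics and their geometry (Laplacians, volume forms) is uniformly controlled.

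The second and main step is the estimate on $(k+\Delta_{\phi_s})e^{\psi_k(\phi_s)}$. By Lemma \ref{lem:exppsi}, $\psi_k(\phi_s)=\frac{\theta(\phi_s)+\uS}{2k}+\co(k^{-1})$ uniformly in $C^l$ for $\phi_s$ ranging over our bounded set, so $e^{\psi_k(\phi_s)}=1+\frac{\theta(\phi_s)+\uS}{2k}+\co(k^{-1})=1+O(k^{-1})$ and $\Delta_{\phi_s}e^{\psi_k(\phi_s)}=\frac{1}{2k}\Delta_{\phi_s}\theta(\phi_s)+\co(k^{-1})=O(k^{-1})$. Therefore
$$
(k+\Delta_{\phi_s})e^{\psi_k(\phi_s)}=k\bigl(1+O(k^{-1})\bigr)+O(k^{-1})=k+O(1),
$$
with the $O(1)$ term bounded in $C^0$ uniformly in $s\in[0,1]$ once $k\geq k_0(\phi)$. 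Hence for $k$ large enough this quantity is $\geq k/2>0$ everywhere on $X$, and since $\vert d\phi'\vert_{\phi_s}^2\geq 0$ and $d\mu_{\phi_s}>0$, the integrand above is $\leq 0$, giving $\frac{d^2}{ds^2}I_k^{\sigma}(\phi_s)\leq 0$ for all $s\in[0,1]$. This is the required concavity.

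The only point demanding care — and the main obstacle — is making the word "uniform" precise: one must check that the implied constants in the expansions of $\rho_k$ and $\psi_k$, and the constants controlling $\Delta_{\phi_s}$, $d\mu_{\phi_s}$ and the $C^l$-norms, can all be taken to depend only on a $C^l$-bound for $\phi$ (hence on $\phi$) and not on $s$ or $k$. This is where the choice of $k_0$ depending on $\phi$ enters: fix a $C^l$-ball $B\subset\cH^G$ containing $\phi$; by Theorem \ref{theorem} and Lemma \ref{lem:exppsi} the path $\phi_s$ lies in a slightly larger $C^l$-ball for all $s$ once $k\geq k_1$, on which all expansions are uniform; then choose $k_0\geq k_1$ large enough that the $O(1)$ error in $(k+\Delta_{\phi_s})e^{\psi_k(\phi_s)}$ is bounded by $k/2$ in $C^0$. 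Everything else is a routine consequence of Lemma \ref{lem:hessian}. I would also remark that the same computation in fact shows $\frac{d^2}{ds^2}I_k^{\sigma}(\phi_s)=0$ iff $\phi'$ is constant, i.e. iff $\rho_k(\phi)$ is constant, i.e. iff $\phi$ is balanced — but strict concavity is not needed here.
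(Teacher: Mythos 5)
Your proof is correct, and it follows the paper's strategy up to the last step: both start from the Hessian formula of Lemma \ref{lem:hessian}, use $\phi''=0$ to reduce the second derivative to $-\frac{k^n}{2}\int_X \vert d\phi'\vert^2 (k+\Delta_{\phi_s})e^{\psi_k(\phi_s)}\,d\mu_{\phi_s}$, and control the path via the Bergman expansion. Where you diverge is in how the sign is extracted. The paper replaces $\Delta_{\phi_s}$, $d\mu_{\phi_s}$, $\psi_k(\phi_s)$ by their values at $\phi$ up to $\cO(k^{-1})$ and then computes the leading asymptotic of the whole integral, finding $-\frac{k^{n-3}}{8}\int_X\vert dS(\phi)\vert^2 d\mu_{\phi}+\cO(k^{n-4})$, which it declares negative. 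You instead observe that the integrand is pointwise nonnegative once $(k+\Delta_{\phi_s})e^{\psi_k(\phi_s)}=k+\cO(1)>0$, which holds for $k\geq k_0(\phi)$. Your route is slightly more robust: the paper's leading coefficient $\int_X\vert dS(\phi)\vert^2 d\mu_{\phi}$ vanishes when $S(\phi)$ is constant, in which case "the leading term is negative" does not by itself dominate the $\cO(k^{n-4})$ error, whereas your pointwise positivity argument gives $\frac{d^2}{ds^2}I_k^{\sigma}(\phi_s)\leq 0$ unconditionally; the price is that you only get non-strict concavity, but that is all the lemma asserts and all that is used in Proposition \ref{prop:compare}. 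Your remarks on uniformity (the path staying in a fixed $C^l$-ball for $k\geq k_1$, and the expansions of $\rho_k$ and $\psi_k$ being uniform there) correctly identify where $k_0$ depends on $\phi$ and match the paper's implicit use of $\om_{\phi_0}-\om_{\phi_1}=\cO(k^{-2})$.
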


\begin{proof}
By lemma~\ref{lem:hessian}, the second derivative of $I_k^{\sigma}$ along
the path $\phi_k(s)=\phi+\frac{s}{k}\log(\rho_k(\phi))$ is
$$
k^n\int_X (\phi_k''-\frac{1}{2} \vert d\phi_k'\vert^2)
(k+\Delta_{\phi_k(s)})e^{\psi_k(\phi_k(s))}d\mu_{\phi_k(s)}.
$$
As $\phi_k'=\frac{1}{k}\log(\rho_k(\phi))$ and $\phi_k''=0$, this expression simplifies:
$$
\frac{d^2}{ds^2} I_k^{\sigma} (\phi_k(s))=-k^n\int_X \frac{1}{2} \vert d \frac{1}{k}\log(\rho_k(\phi)) \vert^2 (k+\Delta_{\phi_k(s)})e^{\psi_k(\phi_k(s))}d\mu_{\phi_k(s)}.
$$
We compute the leading term in the above expression as $k$ goes to
infinity. To simplify notation, let $T_k(\phi)=FS_k\circ Hilb_k(\phi)$. Note that $\om_{\phi_1}=\om_{T_k(\phi)}$. From (\ref{cor:ber}), the difference between $\om_{\phi_0}$
and $\om_{\phi_1}$ is $$\om_{\phi_0}-\om_{\phi_1}=\cO(k^{-2}).$$ 
Thus we have the estimates
$$
\Delta_{\phi_k(s)}=\Delta_{\phi}+\cO(k^{-1}),
$$
$$
d\mu_{\phi_k(s)}=d\mu_{\phi}+\cO(k^{-1})
$$
and
$$
\psi_k(\phi_k(s))=\psi_k(\phi)+\cO(k^{-1}).
$$
Then$$
\frac{d^2}{ds^2} I_k^{\sigma} (\phi_k(s))=-k^n\int_X \frac{1}{2} \vert d \frac{1}{k}\log(\rho_k(\phi)) \vert^2 (k+\Delta_{\phi})e^{\psi_k(\phi)} d\mu_{\phi} + \cO(k^{n-4}).
$$
From this we deduce that the leading term as $k$ tends to infinity is
$$
-\frac{k^{n-3}}{8} \int_X \vert dS(\phi)\vert ^2 d\mu_{\phi}<0
$$
where once again we used the expansions of Bergman kernel and of $\psi_k(\phi)$ from lemma \ref{lem:exppsi}.
\end{proof}

\noindent Now we can prove the main result of this section:

\begin{proposition}
\label{prop:compare}
For each potential $\phi\in\cH^G$, we have
$$
\lim_{k\rightarrow \infty} k^{-n}(\cL_k^{\sigma}(\phi)-Z_k^{\sigma}\circ Hilb_k (\phi)) =0 
$$
\end{proposition}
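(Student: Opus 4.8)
The plan is to unwind the definitions so that the two functionals differ only through $I_k^{\sigma}$, and then to estimate that difference along a short path. Writing $T_k=FS_k\circ Hilb_k$, the definitions (\ref{eq:Lk}) and (\ref{eq:Zk}) give
$$
\cL_k^{\sigma}(\phi)-Z_k^{\sigma}\circ Hilb_k(\phi)= I_k^{\sigma}(\phi)-I_k^{\sigma}(T_k(\phi))+k^n\log(k^n)\mathrm{Vol}(X),
$$
since the two copies of $I_k\circ Hilb_k$ cancel. So everything reduces to showing
$$
I_k^{\sigma}(T_k(\phi))-I_k^{\sigma}(\phi)=k^n\log(k^n)\mathrm{Vol}(X)+o(k^n).
$$

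To get the left-hand side, I would integrate the one-form $\delta I_k^{\sigma}$ along the path $\phi_k(s)=\phi+\frac{s}{k}\log(\rho_k(\phi))$, $s\in[0,1]$, which runs from $\phi$ to $T_k(\phi)=\phi+\frac1k\log\rho_k(\phi)$ and lies in $\cH^G$ for $k\gg 1$ (because $\sqrt{-1}\del\delb\log\rho_k(\phi)=\cO(k^{-1})$, so $\om_{\phi_k(s)}=\om_{\phi}+\cO(k^{-2})$, cf. (\ref{cor:ber})); path-independence of $I_k^{\sigma}$ (proposition \ref{prop:indep}) makes this legitimate. By Taylor's formula with integral remainder,
$$
I_k^{\sigma}(T_k(\phi))-I_k^{\sigma}(\phi)=\frac{d}{ds}\Big|_{s=0}I_k^{\sigma}(\phi_k(s))+\int_0^1(1-s)\frac{d^2}{ds^2}I_k^{\sigma}(\phi_k(s))\,ds.
$$
The second-derivative term is exactly the quantity computed in the proof of lemma \ref{lem:concave}, where it is shown to have leading order $-\tfrac{k^{n-3}}{8}\int_X|dS(\phi)|^2\,d\mu_{\phi}$; hence it is $\cO(k^{n-3})$ and negligible. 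The first-derivative term, by the formula for $\delta I_k^{\sigma}$ and using $\phi_k'(0)=\frac1k\log\rho_k(\phi)$, equals
$$
\frac{d}{ds}\Big|_{s=0}I_k^{\sigma}(\phi_k(s))=k^n\int_X\log(\rho_k(\phi))\Big(1+\frac{\Delta_{\phi}}{k}\Big)e^{\psi_k(\phi)}\,d\mu_{\phi}.
$$

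It remains to feed in the asymptotics. The Bergman expansion gives $\log(\rho_k(\phi))=n\log k+\tfrac12 S(\phi)k^{-1}+\cO(k^{-2})$ in $C^l$; since the complex Laplacian is a divergence, $\int_X\Delta_{\phi}(e^{\psi_k(\phi)})\,d\mu_{\phi}=0$, and the normalization (\ref{eq:norm}) gives $\int_X e^{\psi_k(\phi)}\,d\mu_{\phi}=N_k/k^n$, so the $n\log k$ part of $\log\rho_k(\phi)$ contributes precisely $n\log k\cdot N_k$, while the remaining $\cO(k^{-1})$ part contributes $\cO(k^{n-1})$ (using $\psi_k(\phi)=\cO(k^{-1})$ from lemma \ref{lem:exppsi} and boundedness of $S(\phi)$). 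Hence $I_k^{\sigma}(T_k(\phi))-I_k^{\sigma}(\phi)=n\log k\cdot N_k+\cO(k^{n-1}\log k)$, and plugging in the expansion (\ref{cor:Nk}) of $N_k$ turns $n\log k\cdot N_k$ into $n k^n\log k\,\mathrm{Vol}(X)+\cO(k^{n-1}\log k)=k^n\log(k^n)\mathrm{Vol}(X)+\cO(k^{n-1}\log k)$. Combining with the first display, $\cL_k^{\sigma}(\phi)-Z_k^{\sigma}\circ Hilb_k(\phi)=\cO(k^{n-1}\log k)$, and dividing by $k^n$ proves the proposition. The only delicate point is the bookkeeping of the logarithmic factors: the constant $-k^n\log(k^n)\mathrm{Vol}(X)$ built into $Z_k^{\sigma}$ is there precisely to cancel the $n k^n\log k\,\mathrm{Vol}(X)$ produced by the $n\log k$ term in $\log\rho_k(\phi)$, and one must check that every remaining contribution is genuinely $o(k^n)$ after the $k^{-n}$ rescaling — which it is, the worst being $\cO(k^{n-1}\log k)$.
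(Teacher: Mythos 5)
Your proposal is correct and follows the same overall strategy as the paper: the same cancellation of the two copies of $I_k\circ Hilb_k$ reducing everything to $I_k^{\sigma}(T_k(\phi))-I_k^{\sigma}(\phi)$, the same path $s\mapsto\phi+\frac{s}{k}\log\rho_k(\phi)$, and the same asymptotic evaluation of $(\delta I_k^{\sigma})_{\phi}(\frac{1}{k}\log\rho_k(\phi))$ via the Bergman expansion, Lemma \ref{lem:exppsi} and the normalization (\ref{eq:norm}), with the built-in constant $-k^n\log(k^n)\mathrm{Vol}(X)$ absorbing the $n\log k\cdot N_k$ term. The one genuine difference is how you pass from the derivative to the increment: the paper invokes concavity of $I_k^{\sigma}$ along the path (Lemma \ref{lem:concave}) to sandwich $I_k^{\sigma}(T_k(\phi))-I_k^{\sigma}(\phi)$ between the two endpoint evaluations of the first derivative, and must therefore repeat the asymptotic computation at $T_k(\phi)$ as well as at $\phi$; you instead use Taylor's formula with integral remainder, which needs only the evaluation at $s=0$ but requires the quantitative bound $\frac{d^2}{ds^2}I_k^{\sigma}(\phi_k(s))=\cO(k^{n-3})$ uniformly in $s$, not merely its sign. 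That bound is exactly what the computation inside the proof of Lemma \ref{lem:concave} delivers (leading term $-\frac{k^{n-3}}{8}\int_X\vert dS(\phi)\vert^2\,d\mu_{\phi}$ up to $\cO(k^{n-4})$), so your shortcut is legitimate and spares the second endpoint evaluation, at the modest price of tracking the size of the Hessian rather than just its negativity.
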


\begin{proof}
By definition,
$$
k^{-n}(\cL_k^{\sigma}(\phi)-Z_k^{\sigma}\circ Hilb_k (\phi)) = 
-k^{-n}( I_k^{\sigma} (T_k(\phi)) - I_k^{\sigma}(\phi)   -k^n\log(k^n)\mathrm{Vol}(X))
$$
where $T_k=FS_k\circ Hilb_k$.
From lemma~\ref{lem:concave}, for $k$ large enough, the functional $I_k^{\sigma}$ is concave along the path
$$
s  \mapsto  \phi+\frac{s}{k}\log(\rho_k(\phi))
$$
going from $\phi$ to $T_k(\phi)$ in $\cH^G$.

\noindent Thus
\begin{equation}
\label{eq:concave}
(\delta I_k^{\sigma})_{\phi}(\frac{1}{k}\log \rho_k(\phi)) 
 \geq (I_k^{\sigma} (T_k(\phi)) - I_k^{\sigma}(\phi))
  \geq (\delta I_k^{\sigma})_{T_k(\phi)}(\frac{1}{k}\log \rho_k(\phi)) .
\end{equation}

\noindent We deduce from the definitions that
\begin{equation}
\label{eq:concave}
k^{-n}(\cL^{\sigma}_k(\phi)-Z^{\sigma}_k\circ Hilb_k (\phi))
 \geq 
 -k^{-n}(\delta I_k^{\sigma})_{\phi}(\frac{1}{k}\log \rho_k(\phi)) + \log(k^n)\mathrm{Vol}(X)
\end{equation}
and
\begin{equation}
\label{eq:concave2}
 -k^{-n}(\delta I_k^{\sigma})_{T_k(\phi)}(\frac{1}{k}\log \rho_k(\phi)) + \log(k^n)\mathrm{Vol}(X)
 \geq 
k^{-n}(\cL^{\sigma}_k(\phi)-Z^{\sigma}_k\circ Hilb_k (\phi))
\end{equation}
and it remains to show that the right hand side of (\ref{eq:concave}) and the left hand side of (\ref{eq:concave2}) tend to zero.
First
$$
k^{-n}(\delta I_k^{\sigma})_{\phi}(\frac{1}{k}\log \rho_k(\phi)) - \log(k^n)\mathrm{Vol}(X) =
\int_X (\frac{1}{k}\log(\rho_k(\phi)))(k+\Delta_{\phi})e^{\psi_k(\phi)} d\mu_{\phi} -\mathrm{Vol}(X)\log(k^n)
$$
$$
=\int_X (\log(k^n)+\frac{S(\phi)}{2k}+\cO(k^{-2}))(1+\frac{\Delta_{\phi}}{k})(1+\frac{\theta(\phi)+\uS}{2k}+\co(k^{-1})) d\mu_{\phi} -\mathrm{Vol}(X)\log(k^n)
$$
by the expansion of Bergman kernel and lemma \ref{lem:exppsi}.
If follows that 
$$
k^{-n}(\delta I_k^{\sigma})_{\phi}(\frac{1}{k}\log \rho_k(\phi)) - \log(k^n)\mathrm{Vol}(X) =\mathrm{Vol}(X)\log(k^n) + \cO(k^{-1}) - \mathrm{Vol}(X)\log(k^n) \rightarrow 0
$$
as $k\rightarrow \infty$.
\par
Note that we didn't make use of the fact that the derivative $\delta I_k^{\sigma}$
was evaluated at $\phi$, so the above argument extends to the last term of the inequality (\ref{eq:concave2}), evaluated at $T_k(\phi)$, which thus tends to zero as well.
This ends the proof.
\end{proof}

\subsection{The metrics $Hilb_k(\om^*)$ are almost $\sigma$-balanced }

We will need the following convexity property of $Z_k^{\sigma}$:

\begin{lemma}
\label{lem:Zconvex}
The functional $Z^{\sigma}_k$ is convex along geodesics in $\cB_k^G$.
\end{lemma}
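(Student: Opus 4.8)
The plan is to reduce the statement to the concavity, already established in Lemma~\ref{lem:concave}, of the companion functional $I_k^{\sigma}$ along suitable affine paths in $\cH^G$, transported to $\cB_k^G$ via $FS_k$ and combined with the known convexity of $I_k=\log\circ\det$. Recall that $Z_k^{\sigma}=I_k^{\sigma}\circ FS_k+I_k-k^n\log(k^n)\mathrm{Vol}(X)$, the constant term being irrelevant to convexity, so it suffices to treat $I_k^{\sigma}\circ FS_k$ and $I_k$ separately. A geodesic in $\cB_k^G$ for the symmetric-space metric $d_k$ is of the form $H_t=H_0^{1/2}\exp(tA)H_0^{1/2}$ for a $G$-invariant Hermitian endomorphism $A$; along such a path $I_k(H_t)=\log\det H_t$ is affine in $t$, hence trivially convex (in fact linear), so the entire contribution comes from the first term.

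The heart of the argument is therefore to show that $t\mapsto I_k^{\sigma}(FS_k(H_t))$ is convex along Bergman geodesics. First I would record that $FS_k$ sends a $d_k$-geodesic $H_t$ to a path $\phi_t=FS_k(H_t)\in\cH^G$ whose time-derivatives are computable explicitly: writing $\phi_t=\frac1k\log\big(\frac1{N_k}\sum_\alpha|s_\alpha^{(t)}|^2_{h_0^k}\big)$ with $\{s_\alpha^{(t)}\}$ an $H_t$-orthonormal basis, one gets $\phi_t'=\frac1k\,\mathrm{tr}\big(\dot A\,\Pi_t\big)$-type expressions where $\Pi_t$ is the Bergman projection; the relevant structural fact is the classical one (going back to Donaldson's work on $Hilb$ and $FS$) that $\phi_t''\le \frac12|d\phi_t'|^2_{\phi_t}$ pointwise, i.e. the integrand $\phi_t''-\frac12|d\phi_t'|^2_{\phi_t}$ appearing in Lemma~\ref{lem:hessian} is non-positive along any $FS_k$-image of a $\cB_k^G$-geodesic. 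Granting this, Lemma~\ref{lem:hessian} gives
$$
\frac{d^2}{dt^2}I_k^{\sigma}(FS_k(H_t))=k^n\int_X\Big(\phi_t''-\tfrac12|d\phi_t'|^2_{\phi_t}\Big)(k+\Delta_{\phi_t})e^{\psi_k(\phi_t)}\,d\mu_{\phi_t},
$$
and since $e^{\psi_k(\phi_t)}>0$ and the operator $k+\Delta_{\phi_t}$ is, for the purposes of this pairing, a positive operator (its lowest eigenvalue is $k>0$, so $\int_X f\,(k+\Delta_\phi)g\,d\mu_\phi$ inherits sign information appropriately when $f\le 0$ and $g>0$ — this is exactly the kind of estimate used implicitly in Lemma~\ref{lem:concave}), the integrand is $\le 0$, whence $I_k^{\sigma}\circ FS_k$ is convex and so is $Z_k^{\sigma}$.

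I would organize the write-up as: (i) reduce to convexity of $I_k^{\sigma}\circ FS_k$ since $I_k$ is affine along geodesics; (ii) parametrize a geodesic $H_t$ in $\cB_k^G$ and compute $\phi_t=FS_k(H_t)$, establishing the pointwise inequality $\phi_t''\le\frac12|d\phi_t'|^2_{\phi_t}$; (iii) insert this into the Hessian formula of Lemma~\ref{lem:hessian} and conclude. The main obstacle I anticipate is step (ii): making the inequality $\phi_t''-\frac12|d\phi_t'|^2\le 0$ precise requires care with the Bergman projection and with the $G$-invariance constraint ensuring $H_t$ stays in $\cB_k^G$; one must verify that $FS_k$ of a totally geodesic $\cB_k^G$ still lands in $\cH^G$ (guaranteed by the discussion preceding (\ref{eq:Hilb})) and that the positivity of $k+\Delta_{\phi_t}$ is strong enough to handle the pairing with a possibly sign-indefinite Laplacian term — this is the technical point that the companion Lemma~\ref{lem:concave} already navigates in a special case, so I would expect to reuse that computation almost verbatim, the only new input being the abstract convexity of $FS_k$-images rather than the explicit Bergman-kernel path. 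If this lemma is proved in the Appendix in the paper, I would defer the detailed verification of (ii) there and here simply state the reduction and cite Lemma~\ref{lem:hessian}.
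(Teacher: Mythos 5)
Your strategy---split $Z_k^{\sigma}$ into $I_k^{\sigma}\circ FS_k$ plus the term $I_k$, which is indeed affine along geodesics of $\cB_k^G$, and then control the second derivative of the first summand via Lemma~\ref{lem:hessian}---is genuinely different from the paper's, which follows \cite{Don05} and exhibits the second derivative of $Z_k^{\sigma}$ directly as the integral of the pointwise sum of squares (\ref{eq:integrand}). As written, however, your argument has real gaps. First, the pointwise inequality you invoke is backwards: the $FS_k$-image of a Bergman geodesic is a \emph{subgeodesic}, i.e. $\phi_t''-\frac12|d\phi_t'|^2_{\phi_t}\ge 0$, not $\le 0$. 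Indeed, $k\phi_t(x)$ is, up to the fixed background potential and a constant, $\log\sum_\alpha|e^{\lambda_\alpha w}\tau_\alpha(x)|^2$ with $t=\mathrm{Re}\,w$, the logarithm of a sum of squared moduli of sections holomorphic on $X\times\{\text{strip}\}$; the associated $(1,1)$-current is positive and its normal component, after completing the square, is exactly $\phi_t''-|\partial\phi_t'|^2_{\phi_t}$. Second, even granting your inequality, a non-positive second derivative makes $I_k^{\sigma}\circ FS_k$ \emph{concave}, not convex, so your conclusion does not follow from your premises (the two sign errors happen to cancel in the final claim). Third, the positivity argument for the pairing is invalid: positivity of $k+\Delta_{\phi}$ as an operator controls the quadratic form $\int_X f(k+\Delta_{\phi})f\,d\mu_{\phi}$, but says nothing about the sign of $\int_X f(k+\Delta_{\phi})g\,d\mu_{\phi}$ for two different functions $f$ of one sign and $g>0$. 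What you actually need is pointwise positivity of $(k+\Delta_{\phi_t})e^{\psi_k(\phi_t)}$, which holds, but only for $k$ large, by the expansion of Lemma~\ref{lem:exppsi}.

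With both signs corrected and the positivity supplied as above, your route does prove convexity of $Z_k^{\sigma}$ along geodesics of $\cB_k^G$ for all sufficiently large $k$, which would still suffice for Proposition~\ref{prop:almostbal}. But be aware of the trade-off: the paper's sum-of-squares identity is an exact algebraic computation valid for \emph{every} $k$ with no asymptotic input, whereas your argument is asymptotic, and its key step---the subgeodesic property in (ii)---is precisely the point you propose to defer; it is not established anywhere in the paper (Lemma~\ref{lem:concave} concerns a different, affine path in $\cH^G$, not the $FS_k$-image of a $\cB_k^G$-geodesic), so you would have to prove it yourself.
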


\begin{proof}
We follow the proof of Proposition 1 in \cite{Don05} (also Lemma 3.1 in \cite{PS03}).
Here we abbreviate the subscript $k$.
Take a geodesic $\{H(s)\}_{s\in \mathbb{R}}$ in $\mathcal{B}^G$.
By choosing an appropriate orthonormal basis $\{\tau_\alpha\}$ of $H(0)$, $H(s)$ is represented by
\begin{equation}\label{eq:diagonalize}
	H(s)=diag(e^{2\lambda_\alpha s}), \,\,\, \lambda_\alpha\in \mathbb{R},
	\,\, \sum_\alpha \lambda_\alpha=0
\end{equation}
with respect to the basis $\{ \tau_\alpha\}$.
We denote the associated one parameter subgroup of $SL(H^0(M,L))$ by $\varrho(s)$.
We denote the K\"ahler potential $\phi_s=FS(H(s))$ by 
$$
	\phi_s=\log \big(\sum_\alpha |\varrho(s) \cdot\tau_\alpha|^2/\sum_\beta |\tau_\beta|^2\big).
$$

First of all, we will show the first variation of $Z^\sigma$ along $\phi_s$.
From (\ref{eq:derivative_psi}), we have
\begin{eqnarray}
	\nonumber
		\frac{d Z^\sigma}{ds} (s)
	&=&
		\int_X
		\phi'_{s}
		(1+\Delta_{FS(H(s))})
		e^{\psi_s}
		d\mu_{FS(H(s))}
	\\
	\nonumber
	&=&
		\int_X
		\phi'_{s}e^{\psi_s}
		+
		\frac{d}{ds} e^{\psi_s}
		d\mu_{FS(H(s))}
	\\
	\nonumber
	&=&
		\int_X
		\frac{\sum_\alpha
		2
		\lambda_\alpha |\varrho(s)\cdot\tau_\alpha|^2}{\sum_\beta |\varrho(s)\cdot\tau_\beta|^2}
		\bigg(\frac{\sum_\gamma |\varrho(s)\cdot\sigma^* \tau_\gamma|^2}{\sum_\beta |\tau_\beta|^2}\bigg)
	\\
	\nonumber
	&& \qquad\qquad
		+
		\bigg\{\frac{d}{ds}
		\bigg(\frac{\sum_\gamma |\varrho(s)\cdot\sigma^* \tau_\gamma|^2}{\sum_\beta |\varrho(s)\cdot \tau_\beta|^2}\bigg)
		\bigg\}
		d\mu_{FS(H(s))}
	\\
	\label{eq:1st_variation_Z}
	&=&
		\int_X
		\frac{\sum_\alpha 
		2
		\widetilde{\lambda}_\alpha|\varrho(s)\cdot\sigma^* \tau_\alpha|^2}{\sum_\beta |\varrho(s)\cdot\tau_\beta|^2}
		d\mu_{FS(H(s))},
\end{eqnarray}
where $\psi_s$ denotes $\psi_{\sigma, FS(H(s))}$.
In (\ref{eq:1st_variation_Z}), $H(s)$ is represented by
$$
	H(s)=diag(e^{2\widetilde{\lambda}_\alpha s}), \,\,\, \lambda_\alpha\in \mathbb{R},
	\,\, \sum_\alpha \widetilde{\lambda}_\alpha =0
$$
with respect to the basis $\{ \sigma^*\tau_\alpha\}$.
Let
$$
	\varphi'_s:=\frac{\sum_\alpha 2 \widetilde{\lambda}_\alpha |\varrho(s)\cdot(\sigma^* \tau_\alpha)|^2}{\sum_\beta |\varrho(s)\cdot\tau_\beta|^2}.
$$
Then, we have
\begin{equation}\label{eq:2nd_vari_Z(2)}
	\frac{d^2 Z^{\sigma}}{ds^2}(0)
	=
	\int_{X} \big\{\varphi''_0 -(\nabla \varphi'_0, \nabla \phi'_0)\big\} d\mu_{FS(H(0))}.
\end{equation}
Here we denote the connection of type $(1,0)$ by $\nabla$.
Following \cite{Don05}, it is sufficient to prove that the integrand of (\ref{eq:2nd_vari_Z(2)}) is equal to
\begin{equation}\label{eq:integrand}
	\sum_\alpha|(\nabla \phi'_0, \nabla (\sigma^* \tau_\alpha)) - (2\widetilde{\lambda}_\alpha- \phi'_0)(\sigma^*\tau_\alpha) |_{FS(H(0))}^2
\end{equation}
pointwise on $X$.
Expanding out, (\ref{eq:integrand}) is equal to
\begin{eqnarray}
	\nonumber
	&&
		\sum_\alpha |(\nabla \phi'_0, \nabla (\sigma^* \tau_\alpha))|_{FS(H(0))}^2
		-2 
		\sum_\alpha(2\widetilde{\lambda}_\alpha - \phi'_0)((\nabla \phi'_0, \nabla(\sigma^* \tau_\alpha)), \sigma^*\tau_\alpha)
	\\
	\label{eq:expanding}
	&& \qquad
		+ \sum_\alpha(2\widetilde{\lambda}_\alpha - \phi'_0)^2|\sigma^* \tau_\alpha|_{FS(H(0))}^2.
\end{eqnarray}
The second term of (\ref{eq:expanding}) is equal to
\begin{eqnarray}
	\nonumber
		-2
		\sum_\alpha(2\widetilde{\lambda}_\alpha - \phi'_0)(\nabla \phi'_0, ( \sigma^*\tau_\alpha, \nabla(\sigma^* \tau_\alpha)))
	&=&
		-2 
		\sum_\alpha(2\widetilde{\lambda}_\alpha - \phi'_0)
		(\nabla \phi'_0,\nabla(|\sigma^*\tau_\alpha|_{FS(H(0))}^2))
	\\
	\nonumber
	&=&
		- 2(\nabla \phi'_0, \nabla \varphi'_0)
		+
		2\phi'_0(\nabla \phi'_0, \nabla e^{\psi_0})
	\\
	\nonumber
	&=&
		- 2(\nabla \phi'_0, \nabla \varphi'_0)
		+
		2\phi'_0 \psi'_0 e^{\psi_0}
	\\
	\label{eq:2ndterm}
	&=&
		- 2(\nabla \phi'_0, \nabla \varphi'_0)
		+
		2\phi'_0 (\varphi'_0 - \phi'_0 e^{\psi_0}).
\end{eqnarray}
In above, we use (\ref{eq:derivative_psi}) in the Appendix and
$$
		\psi'_0 e^{\psi_0}
	=
		\frac{d}{ds}\bigg|_{s=0} e^{\psi_s}=\varphi'_0 - \phi'_0 e^{\psi_0}.
$$
The third term of (\ref{eq:expanding}) is equal to
\begin{equation}\label{eq:3rdterm}
	\sum_\alpha 4 \widetilde{\lambda}_\alpha^2 |\sigma^*\tau_\alpha|_{FS(H(0))}^2
	- 2 \varphi'_0 \phi'_0 + (\phi'_0)^2 e^{\psi_0}.
\end{equation}
Substituting (\ref{eq:2ndterm}) and (\ref{eq:3rdterm}) into (\ref{eq:expanding}), (\ref{eq:expanding}) is equal to
$$
		\sum_\alpha |(\nabla \phi'_0, \nabla (\sigma^* \tau_\alpha))|_{FS(H(0))}^2
		- 2(\nabla \phi'_0, \nabla \varphi'_0)
		-
		(\phi'_0)^2 e^{\psi_0}
		+
		\sum_\alpha 4 \widetilde{\lambda}_\alpha^2 |\sigma^*\tau_\alpha|_{FS(H(0))}^2.
$$
Since
$$
	\varphi''_0
	=
	\sum_\alpha 4 \widetilde{\lambda}_\alpha^2 |\sigma^*\tau_\alpha|_{FS(H(0))}^2
	- \varphi'_0 \phi'_0,
$$
the remain to be proved is 
\begin{equation}\label{eq:remain}
	\sum_\alpha |(\nabla \phi'_0, \nabla (\sigma^* \tau_\alpha))|_{FS(H(0))}^2
	=
	(\nabla \phi'_0, \nabla \varphi'_0)
	+(\phi'_0)^2 e^{\psi_0}-\varphi'_0 \phi'_0.
\end{equation}
In the computation in (\ref{eq:2ndterm}), we found
$$
	-(\phi'_0)^2 e^{\psi_0}+\varphi'_0 \phi'_0=(\nabla \phi'_0, \phi'_0\nabla e^{\psi_0}).
$$
Hence, (\ref{eq:remain}) is equivalent to 
\begin{equation}\label{eq:FS_identity}
	\sum_\alpha |(\nabla \phi'_0, \nabla (\sigma^* \tau_\alpha))|_{FS(H(0))}^2
	=
	(\nabla \varphi'_0, \nabla \phi'_0)
	- (\nabla \phi'_0, \phi'_0\nabla e^{\psi_0}).
\end{equation}
This follows from the definition of the restriction $\omega_{FS(H(0))}$ of the Fubini-Study metric.
To see (\ref{eq:FS_identity}), recall that the Fubini-Study metric is given by
$$
	\frac{\sum_{i}dz^i \wedge d\overline{z}^i}{1 + \sum |z^k|^2}
	- \frac{(\sum \overline{z}^i dz^i) \wedge (\sum z^j d \overline{z}^j)}{1 + \sum |z^k|^2}
$$
on the coordinate chart $U_0= \{(1, z^2, \ldots, z^{N}) \in \mathbb{C}P^{N-1}\}$.
Then, we have
\begin{equation}\label{eq:FS_identity_left}
	|(\nabla \phi'_0, \nabla \tau_\alpha)|_{FS(H(0))}^2
	=
	\frac{(\lambda_\alpha^2  +(\phi'_0)^2 -2 \phi'_0 \lambda_\alpha)|\tau_{\alpha}|^2}
	{\sum_\beta |\tau_\beta|^2},
\end{equation}
\begin{equation}\label{eq:FS_identity_right1}
	(\nabla \phi'_0, \nabla |\tau_\alpha|^2_{FS(H(0))})
	=
	\frac{(\lambda_\alpha  -\phi'_0 )|\tau_{\alpha}|^2}
	{\sum_\beta |\tau_\beta|^2},
\end{equation}
and
\begin{equation}\label{eq:FS_identity_right2}
	(\nabla \phi'_0, \phi'_0 \nabla |\tau_\alpha|_{FS(H(0))}^2)
	=
	\frac{\phi'_0 \lambda_\alpha |\tau_\alpha|^2 - (\phi'_0)^2|\tau_\alpha|^2}
	{\sum_{\beta} |\tau_\beta|^2}.
\end{equation}
We get (\ref{eq:FS_identity}) by substituting (\ref{eq:FS_identity_left}) to the left hand of (\ref{eq:FS_identity}), and (\ref{eq:FS_identity_right1}) and (\ref{eq:FS_identity_right2}) to the right hand of (\ref{eq:FS_identity}).
The proof is completed.
\end{proof}
\noindent
The following corollary is fundamental to understand the idea of this paper, although we do not use as it stands in the proof of the main theorem.
\begin{corollary}\label{prop:Z_min}
If $FS_k(H_k)$ is a $\sigma_k$-balanced metric for some $H_k\in \cB_k^G$, then $H_k$ is a minimum of $Z^{\sigma}_k$ on $\cB_k^G$.
\end{corollary}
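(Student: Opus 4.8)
The plan is to deduce the statement from two ingredients: first, that $H_k$ is a critical point of $Z_k^{\sigma}$ on $\cB_k^G$, and second, the geodesic convexity of $Z_k^{\sigma}$ proved in Lemma~\ref{lem:Zconvex}. Granting both, the conclusion is formal. I would recall that $\cB_k^G$ is totally geodesic inside the symmetric space $\cB_k\simeq GL_{N_k}(\C)/U(N_k)$, which is complete, simply connected and of nonpositive curvature; hence $\cB_k^G$ has the same properties and is in particular geodesically connected. Then, for an arbitrary $H\in\cB_k^G$, I would join $H_k$ to $H$ by a geodesic $H(s)$, $s\in[0,1]$, lying in $\cB_k^G$. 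By Lemma~\ref{lem:Zconvex} the function $s\mapsto Z_k^{\sigma}(H(s))$ is convex, and its derivative at $s=0$ vanishes because $H_k$ is critical; so it is nondecreasing, and $Z_k^{\sigma}(H)=Z_k^{\sigma}(H(1))\geq Z_k^{\sigma}(H(0))=Z_k^{\sigma}(H_k)$. As $H$ is arbitrary, $H_k$ minimizes $Z_k^{\sigma}$.

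The substance of the argument is therefore the criticality of $H_k$, which I would prove by the direct computation that is the $\cB_k^G$-side counterpart of Proposition~\ref{prop:criticL}. Using $Z_k^{\sigma}=I_k^{\sigma}\circ FS_k+I_k-\mathrm{const}$ from~\eqref{eq:Zk}, I note that along a geodesic of $\cB_k^G$ through $H_k$ generated by a trace-free Hermitian endomorphism the term $I_k=\log\det$ is constant, so $\delta Z_k^{\sigma}(H_k)$ equals the first variation of $I_k^{\sigma}\circ FS_k$, i.e.\ the expression computed in the proof of Lemma~\ref{lem:Zconvex} (see~\eqref{eq:1st_variation_Z}). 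I would then insert the hypothesis that $\phi:=FS_k(H_k)$ is $\sigma_k$-balanced: by~\eqref{def:bal}, and exactly as in the proof of Proposition~\ref{prop:criticL}, this forces the Bergman function $\rho_k(\phi)$ to be a constant multiple of $e^{\psi_k(\phi)}$ --- equal, after using~\eqref{cor:Nk} and~\eqref{eq:exppsi}, to $k^n e^{\psi_k(\phi)}$ --- and substituting this relation makes the first variation collapse to a multiple of the trace of the (trace-free) generator, hence to zero. The remaining, determinant-scaling direction of $\cB_k^G$ needs no separate treatment: along it $FS_k$ changes only by an additive constant, so $Z_k^{\sigma}$ is affine there, and the normalization~\eqref{eq:norm} of $\psi_k$ is exactly what makes the linear contributions of $I_k$ and of $I_k^{\sigma}\circ FS_k$ cancel, so $Z_k^{\sigma}$ is in fact constant along it.

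The only non-formal step is this first-variation computation, and the part I expect to require care is the bookkeeping of the various normalizations --- the one pinning $\psi_k$ through~\eqref{eq:norm}, the $k^n$-factors in the definition of $I_k^{\sigma}$, and the expansions~\eqref{cor:Nk} and~\eqref{eq:exppsi} --- so that the $\sigma_k$-balanced identity $\rho_k(\phi)=k^n e^{\psi_k(\phi)}$ enters the formula for $\delta Z_k^{\sigma}$ without stray constants. Everything downstream is the soft principle that a function which is convex along geodesics of a geodesically connected space attains its minimum at any critical point.
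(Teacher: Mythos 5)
Your skeleton is the paper's: show $H_k$ is a critical point of $Z_k^{\sigma}$ via the first-variation formula (\ref{eq:1st_variation_Z}) established inside the proof of Lemma~\ref{lem:Zconvex}, then use the geodesic convexity of that lemma to promote the critical point to an absolute minimum. Your handling of the soft part is in fact more careful than the paper's: you record that $\cB_k^G$ is geodesically connected (totally geodesic in the nonpositively curved symmetric space $\cB_k$), so that convexity plus criticality really does give a global minimum, and you dispose separately of the determinant-scaling direction, where the normalization (\ref{eq:norm}) makes the linear contributions of $I_k$ and of $I_k^{\sigma}\circ FS_k$ cancel. Both points are left implicit in the paper and are worth stating.

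The gap is in the criticality step. You propose to substitute the pointwise identity $\rho_k(\phi)=k^n e^{\psi_k(\phi)}$, $\phi=FS_k(H_k)$, into the first variation and let it "collapse to a multiple of the trace". That identity is exactly what Proposition~\ref{prop:criticL} needs, because $\delta\cL_k^{\sigma}$ has the form $-\int_X\delta\phi\,(\Delta_\phi+k)(\rho_k-k^ne^{\psi_k})\,d\mu_\phi$, in which the difference $\rho_k-k^ne^{\psi_k}$ occurs as a scalar factor multiplying the arbitrary variation. But the first variation of $Z_k^{\sigma}$ along a geodesic $H(s)=\mathrm{diag}(e^{2\lambda_\alpha s})$ is, by (\ref{eq:1st_variation_Z}),
\[
\int_X\frac{\sum_\alpha 2\widetilde{\lambda}_\alpha\,|\sigma^*\tau_\alpha|^2}{\sum_\beta|\tau_\beta|^2}\,d\mu_{FS(H_k)},
\]
a sum weighted by the \emph{distinct} eigenvalues $\widetilde{\lambda}_\alpha$. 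The identity $\rho_k=k^ne^{\psi_k}$ controls only the unweighted sum $\sum_\alpha|\sigma^*\tau_\alpha|^2$ --- it is the trace part of the information carried by (\ref{def:bal}) --- and gives you no handle on the individual integrals $\int_X|\sigma^*\tau_\alpha|^2/\sum_\beta|\tau_\beta|^2\,d\mu$, so the weighted sum does not visibly vanish. What is needed, and what the paper uses, is the matrix-level consequence of (\ref{def:bal}): the two K\"ahler forms in the balanced equation coincide, the embeddings are linearly full, hence the corresponding Hermitian inner products on $H^0(X,L^k)$ are proportional, so that $\{c\,\sigma^*\tau_\alpha\}$ is orthonormal for $T(H_k)=Hilb_k(FS_k(H_k))$. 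Since $\int_X|s|^2_{h_0^k}/\sum_\beta|\tau_\beta|^2_{h_0^k}\,d\mu_{FS(H_k)}=\tfrac{1}{N_k}T(H_k)(s,s)$, each term of the first variation then integrates to $2\widetilde{\lambda}_\alpha/(N_kc^2)$ and the total vanishes because $\sum_\alpha\widetilde{\lambda}_\alpha=\sum_\alpha\lambda_\alpha=0$. Replace your scalar substitution by this orthonormality statement and the argument closes.
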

\begin{proof}
Since $H_k$ is a $\sigma_k$-balanced metric, $\{c(\sigma^* \tau_\alpha)\}_\alpha$ is an orthonormal basis with respect to $T(H_k)$ for some $c>0$.
From (\ref{eq:1st_variation_Z}), $H_k$ is a critical point of $Z^\sigma_k$ on $\cB_k^G$.
From Lemma \ref{lem:Zconvex}, this is an absolute minimum of $Z^\sigma_k$.
\end{proof}

\begin{proposition}
\label{prop:almostbal}
Let $\phi\in\cH^G$. Then there are functions $\ep_{\phi}(k)$ such that
$$
k^{-n}(Z_k^{\sigma}\circ Hilb_k(\phi)) \geq k^{-n}(Z_k^{\sigma}\circ Hilb_k(\phi^*)) + \ep_{\phi}(k)
$$
and such that $\lim_{k \rightarrow \infty} \ep_{\phi}(k)=0$ in  $C^l(X,\R)$ for $l>>1$.
\end{proposition}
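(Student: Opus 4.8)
The plan is to combine the convexity of $Z_k^{\sigma}$ on $\cB_k^G$ (Lemma~\ref{lem:Zconvex}) with the fact that, $\phi^*$ being an extremal metric, $Hilb_k(\phi^*)$ is an approximate critical point of $Z_k^{\sigma}$. First I would reduce the proposition to a derivative estimate. Put $H_k^*=Hilb_k(\phi^*)$ and $H_k=Hilb_k(\phi)$; both lie in $\cB_k^G$ since $\phi^*,\phi\in\cH^G$. As $\cB_k^G$ is totally geodesic in the nonpositively curved symmetric space $\cB_k$, there is a geodesic $\{H_k(s)\}_{s\in[0,1]}$ in $\cB_k^G$ with $H_k(0)=H_k^*$ and $H_k(1)=H_k$. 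By Lemma~\ref{lem:Zconvex} the function $s\mapsto Z_k^{\sigma}(H_k(s))$ is convex, so
$$
Z_k^{\sigma}\circ Hilb_k(\phi)-Z_k^{\sigma}\circ Hilb_k(\phi^*)\ \geq\ \frac{d}{ds}\Big|_{s=0}Z_k^{\sigma}(H_k(s))=:D_k(\phi);
$$
the proposition then holds with $\ep_{\phi}(k):=k^{-n}D_k(\phi)$ once we prove $k^{-n}D_k(\phi)\to 0$.

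Second I would compute $D_k(\phi)$. Choose a basis $\{s_\alpha\}$ of $H^0(X,L^k)$ orthonormal for $H_k^*$ and diagonalizing $H_k$, so $H_k(s)=\mathrm{diag}(e^{2\lambda_\alpha s})$ in this basis, and set $\phi_k^*=FS_k(H_k^*)=FS_k\circ Hilb_k(\phi^*)$. Differentiating $Z_k^{\sigma}=I_k^{\sigma}\circ FS_k+I_k-k^n\log(k^n)\mathrm{Vol}(X)$ (see~(\ref{eq:Zk})) at $s=0$ gives $D_k(\phi)=(\delta I_k^{\sigma})_{\phi_k^*}(\dot\phi_0)+2\sum_\alpha\lambda_\alpha$, where a direct computation with the definition of $FS_k$ gives $\dot\phi_0:=\frac{d}{ds}|_{s=0}FS_k(H_k(s))=-2u_k/(k\,\rho_k(\phi^*))$ with $u_k=\sum_\alpha\lambda_\alpha\,|s_\alpha|^2_{h_{\phi^*}^k}$, $h_{\phi^*}=e^{-\phi^*}h_0$, and $\int_X u_k\,d\mu_{\phi^*}=\sum_\alpha\lambda_\alpha$. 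Substituting $\dot\phi_0$ into $(\delta I_k^{\sigma})_{\phi_k^*}(\dot\phi_0)=k^n\int_X\dot\phi_0(k+\Delta_{\phi_k^*})e^{\psi_k(\phi_k^*)}d\mu_{\phi_k^*}$ and using the Bergman expansion $\rho_k(\phi^*)=k^n+\tfrac12 S(\phi^*)k^{n-1}+\cO(k^{n-2})$, the estimate $\om_{\phi_k^*}=\om_{\phi^*}+\cO(k^{-2})$ of~(\ref{cor:ber}) (hence $\theta(\phi_k^*)=\theta(\phi^*)+\cO(k^{-2})$ and $d\mu_{\phi_k^*}=d\mu_{\phi^*}+\cO(k^{-2})$), and the expansion $\psi_k(\phi_k^*)=\tfrac{\theta(\phi_k^*)+\uS}{2k}+\co(k^{-1})$ of Lemma~\ref{lem:exppsi}, one obtains $k^{n+1}\int_X\dot\phi_0\,d\mu_{\phi^*}=-2\sum_\alpha\lambda_\alpha+\tfrac1k\int_X u_k\,S(\phi^*)\,d\mu_{\phi^*}+o(k^{n-1})\max_\alpha|\lambda_\alpha|$ (the first term cancelling the $I_k$-contribution $2\sum_\alpha\lambda_\alpha$) and $k^n\int_X\dot\phi_0\,\tfrac{\theta(\phi^*)+\uS}{2}\,d\mu_{\phi^*}=-\tfrac1k\int_X u_k(\theta(\phi^*)+\uS)\,d\mu_{\phi^*}+o(k^{n-1})\max_\alpha|\lambda_\alpha|$, so that
$$
D_k(\phi)=\frac1k\int_X u_k\,\big(S(\phi^*)-\uS-\theta(\phi^*)\big)\,d\mu_{\phi^*}+o(k^{n-1})\max_\alpha|\lambda_\alpha|.
$$
Since $\phi^*$ is $G$-invariant and extremal, its reduced scalar curvature vanishes, $S(\phi^*)-\uS-\theta(\phi^*)=S^G(\phi^*)=0$ (for an extremal metric $S(\phi^*)-\uS$ is the holomorphy potential of the extremal vector field $V^*$, hence equals $\theta(\phi^*)=\Pi_{\phi^*}^G S(\phi^*)$ by the choice of $G$), so $D_k(\phi)=o(k^{n-1})\max_\alpha|\lambda_\alpha|$.

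Third I would estimate $\max_\alpha|\lambda_\alpha|$ crudely. For $s\in H^0(X,L^k)$ one has $\|s\|^2_{Hilb_k(\phi)}=\int_X e^{k(\phi^*-\phi)}\tfrac{d\mu_\phi}{d\mu_{\phi^*}}\,|s|^2_{h_{\phi^*}^k}\,d\mu_{\phi^*}$, and since $\phi-\phi^*$ is a fixed smooth function the Hermitian forms $Hilb_k(\phi)$ and $Hilb_k(\phi^*)$ are mutually bounded by a factor $C_\phi\,e^{\pm k\|\phi-\phi^*\|_{C^0}}$ with $C_\phi$ independent of $k$; applied to $s=s_\alpha$, for which $e^{2\lambda_\alpha}=\|s_\alpha\|^2_{Hilb_k(\phi)}/\|s_\alpha\|^2_{Hilb_k(\phi^*)}$, this forces $|\lambda_\alpha|\leq\tfrac12 k\|\phi-\phi^*\|_{C^0}+\cO(1)$, i.e. $\max_\alpha|\lambda_\alpha|=\cO(k)$ with constant depending only on $\phi$. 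Hence $\ep_{\phi}(k)=k^{-n}D_k(\phi)=o(k^{-1})\cdot\cO(k)\to 0$, which proves the proposition.

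The main difficulty is the second step: showing that all the a priori $\cO(k^n)$-size contributions to $D_k(\phi)$ collapse to $\tfrac1k\int_X u_k\,S^G(\phi^*)\,d\mu_{\phi^*}$ plus an error $o(k^{n-1})\max_\alpha|\lambda_\alpha|$. This requires the Bergman and $\psi_k$ expansions one order beyond the leading term, uniformly over a $C^l$-bounded subset of $\cH^G$ containing all the $\phi_k^*$; and, since the error is finally multiplied by the unbounded factor $\max_\alpha|\lambda_\alpha|=\cO(k)$, each error must be kept linear in $\max_\alpha|\lambda_\alpha|$ (not in $\sum_\alpha|\lambda_\alpha|$, which would be too lossy). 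Concretely this comes down to the bounds $|u_k|\leq\max_\alpha|\lambda_\alpha|\,\rho_k(\phi^*)$, whence $\|u_k\|_{L^1(d\mu_{\phi^*})}=\cO(k^n)\max_\alpha|\lambda_\alpha|$, and $\|\dot\phi_0\|_{C^0}\leq 2\max_\alpha|\lambda_\alpha|/k$, which are the estimates one must carry through the calculation.
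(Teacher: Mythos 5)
Your proposal is correct and follows essentially the same route as the paper: lower-bound $Z_k^{\sigma}\circ Hilb_k(\phi)-Z_k^{\sigma}\circ Hilb_k(\phi^*)$ by the derivative at $s=0$ along the diagonal geodesic in $\cB_k^G$ using Lemma~\ref{lem:Zconvex}, establish $\max_\alpha|\lambda_\alpha^{(k)}|\leq Ck$ by comparing the two Hermitian metrics on $L^k$, and show the derivative is $o(k^n)$ by combining the Bergman expansion with the expansion of $\psi_k$ from Lemma~\ref{lem:exppsi}, the extremal equation $S(\phi^*)=\theta(\phi^*)+\uS$ cancelling the $k^{-1}$-order term exactly as in the paper. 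The only (cosmetic) differences are that you differentiate $I_k^{\sigma}\circ FS_k$ and $I_k$ separately rather than invoking the first-variation formula~(\ref{eq:1st_variation_Z}), and you are slightly more careful about evaluating at $\phi_k^*=FS_k\circ Hilb_k(\phi^*)$ versus $\phi^*$.
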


\begin{proof}
We follow Li's proof of \cite{li}[Lemma 3.3.], adapted to our more general setting.
In the sequel, $C$ will stand for a constant depending on $\phi$, $\phi^*$ and the volume of the polarized manifold $(X,L)$, but independent on $k$.
The precise value of this constant might change but it won't be important for us.
\par
Let's set $H_k^*=Hilb_k(\phi^*)$ and $H_k=Hilb_k(\phi)$.
We choose an orthonormal basis $\lbrace \tau_{\a}^{(k)} \rbrace$ of $H^*_k$
such that in this basis $H_k^*$ is represented by the identity and
$$
H_k=diag(e^{2\lambda_{\a}^{(k)}}).
$$
Then evaluating $H_k$ on the orthonormal vectors $e^{\lambda_{\a}^{(k)}}\tau_{\a}^{(k)}$:
\begin{equation}
\label{eq1}
e^{-2\lambda_{\a}^{(k)}}=\int_X \vert \tau_{\a}^{(k)}\vert_{h_0^k}^2
d\mu_0.
\end{equation}
Comparing the metrics we have the existence of $C>0$ such that
$$
C^{-k} h_{\phi^*}^k
\leq
h_0^k\leq C^k h_{\phi^*}^k
$$
from which we deduce with (\ref{eq1}) the following estimate:
\begin{equation}
\label{eq:estimelambda}
\vert \lambda_{\a}^{(k)} \vert \leq C k.
\end{equation}
Let's consider the one-parameter subgroup of $\cB_k^G$:
$$
s \mapsto H_k(s)=diag(e^{2s \lambda_{\a}^{(k)}}).
$$
This is a geodesic that goes from $H_k^*$ to $H_k$ in $\cB_k^G$, thus by lemma~\ref{lem:Zconvex}:
$$
k^{-n}(Z_k^{\sigma}(H_k)-Z_k^{\sigma}(H_k^*))\geq k^{-n}f_k'(0)
$$ 
with
$$
f_k(s)=Z_k^{\sigma}(H_k(s)).
$$

\par
We then need to show that $\lim_{k\rightarrow \infty}k^{-n}f_k'(0)=0 $.
By a straightforward computation
$$
k^{-n}f_k'(0)=2k^{-n}\sum_{\a} \lambda_{\a}^{(k)} -\frac{2}{k}\int_X \frac{\rho_k^{\lambda}}{\rho_k} (k+\Delta)e^{\psi_k} d\mu
$$
where $\rho_k^{\lambda}=\sum_{\a}\lambda_{\a}^{(k)} \vert \tau_{\a}^{(k)}\vert^2_{h_0^k}$ and the quantities $\rho_k$, $\Delta$, $\psi_k$ and $d\mu$ are computed with respect to the extremal metric $\om_{\phi^*}$.
Then
\begin{equation}
\label{eq:fprime}
2^{-1}k^{-n}f_k'(0)=k^{-n}\sum_{\a} \lambda_{\a}^{(k)}-\int_X \frac{\rho_k^{\lambda}}{\rho_k} e^{\psi_k} d\mu - \frac{1}{k} \int_X  \frac{\rho_k^{\lambda}}{\rho_k} \Delta e^{\psi_k} d\mu.
\end{equation}

\noindent We first show that the last term in the sum of (\ref{eq:fprime}) tends to zero.
First note that from (\ref{eq:estimelambda}), 
$$
\vert\frac{\rho_k^{\lambda}}{\rho_k}\vert \leq Ck
$$
thus
$$
\vert\frac{1}{k} \int_X  \frac{\rho_k^{\lambda}}{\rho_k} \Delta e^{\psi_k} d\mu \vert\leq 
C  \int_X \vert\Delta e^{\psi_k}\vert d\mu
$$
and using lemma \ref{lem:exppsi} we deduce that this term goes to zero as
$k$ tends to infinity.

\noindent Then consider the second term in the right hand side of equation (\ref{eq:fprime}).
Using the expansions of $\psi_k$ and $\rho_k$ we deduce:
$$
\rho_k^{-1} e^{\psi_k}=k^{-n}(1-\frac{S}{2k}+\cO(k^{-2}))(1+\frac{\theta+\uS}{2k}+\co(k^{-1})).
$$
Here we use our crucial assumption,
that is $\om_{\phi^*}$ is extremal, so $S=\theta + \uS$ and thus
$$
\rho_k^{-1} e^{\psi_k}=k^{-n}(1+\co(k^{-1})).
$$
Then
$$
\int_X \frac{\rho_k^{\lambda}}{\rho_k} e^{\psi_k} d\mu =\int_X \frac{\rho_k^{\lambda}}{k^n}(1+\co(k^{-1})) d\mu.
$$
As
$$
\int_X \frac{\rho_k^{\lambda}}{k^n} d\mu= k^{-n}\sum_{\a} \lambda_{\a}^{(k)},
$$
the only remaining term to control at infinity in $k^{-n}f_k'(0)$ is
$$
\int_X \frac{\rho_k^{\lambda}}{k^n}\co(k^{-1}) d\mu.
$$
Using (\ref{eq:estimelambda}),
$$
\vert \frac{\rho_k^{\lambda}}{k^n}\co(k^{-1}) \vert \leq C k N_k k^{-n} \vert \co(k^{-1}) \vert.
$$
By equation (\ref{cor:Nk}), $N_k k^{-n}$ is bounded and as $\co(k^{-1})=k^{-1}\epsilon(k)$ with $\epsilon(k)\rightarrow 0$
$$
\lim_{k\rightarrow \infty}\int_X \frac{\rho_k^{\lambda}}{k^n}\co(k^{-1}) d\mu =0
$$
and
$$
\lim_{k\rightarrow \infty} k^{-n}f_k'(0)=0.
$$
\end{proof}

\subsection{Conclusion, proof of theorem \ref{theo:min}}

We conclude this section with the proof of Theorem~\ref{theo:min}. We show the following stronger theorem, which implies theorem \ref{theo:min} with remark \ref{rmk:mab}:

\begin{theorem}
Let $(X,L)$ be a polarized manifold that carries extremal metrics representing $c_1(L)$.
The modified Mabuchi functional with respect to the $G$-action induced by the extremal vector field of $c_1(L)$ attains its minimum at the extremal metrics.
\end{theorem}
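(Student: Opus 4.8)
The plan is to prove the theorem by assembling, at two well-chosen points of $\cH^G$, the convergence and comparison results established above. Write $\om^{*}=\om_{\phi^{*}}$ for an extremal metric representing $c_1(L)$, where $G$ is the (possibly trivial) one-parameter subgroup generated by the Killing field $JV^{*}$ of $\om^{*}$; note that $\phi^{*}\in\cH^G$. It is enough to show that $E^G(\phi)\ge E^G(\phi^{*})$ for every $\phi\in\cH^G$, since this is precisely the assertion that $\om^{*}$ attains the minimum of the modified K-energy.

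First I would observe that $Z_k^{\sigma}\circ Hilb_k$ quantizes $E^G$ pointwise. By Proposition \ref{prop:LquantizeE} there is a single sequence of constants $c_k$ with $\frac{2}{k^n}\cL_k^{\sigma}+c_k\to E^G$ uniformly on $C^l$-bounded subsets of $\cH^G$, hence in particular at the two points $\phi$ and $\phi^{*}$; combining this with the pointwise estimate of Proposition \ref{prop:compare}, namely $k^{-n}\big(\cL_k^{\sigma}(\psi)-Z_k^{\sigma}\circ Hilb_k(\psi)\big)\to0$, I get, with the \emph{same} $c_k$,
$$
\frac{2}{k^n}\,Z_k^{\sigma}\circ Hilb_k(\phi)+c_k\ \longrightarrow\ E^G(\phi)
\qquad\text{and}\qquad
\frac{2}{k^n}\,Z_k^{\sigma}\circ Hilb_k(\phi^{*})+c_k\ \longrightarrow\ E^G(\phi^{*})
$$
as $k\to\infty$. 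Then I would feed in Proposition \ref{prop:almostbal}, which furnishes numbers $\ep_{\phi}(k)\to0$ with
$$
k^{-n}\,Z_k^{\sigma}\circ Hilb_k(\phi)\ \ge\ k^{-n}\,Z_k^{\sigma}\circ Hilb_k(\phi^{*})+\ep_{\phi}(k);
$$
multiplying by $2$, adding $c_k$ and letting $k\to\infty$ yields $E^G(\phi)\ge E^G(\phi^{*})$. Since $\phi$ was arbitrary, $\phi^{*}$ minimizes $E^G$ on $\cH^G$; and since the argument uses only that $\om_{\phi^{*}}$ is extremal, the same holds for every $G$-invariant extremal representative, so the minimum is attained at each of them.

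Finally I would recover Theorem \ref{theo:min}: an extremal metric is invariant under a maximal compact connected subgroup $G_m$ of $\Aut_0(X,L)$ containing $G$, and by Remark \ref{rmk:mab} the modified K-energy $E^{G_m}$ agrees with $E^G$ on the space of $G_m$-invariant potentials; as $\phi^{*}$ is a minimum of $E^G$ on the larger space $\cH^G$, it is a fortiori a minimum of $E^{G_m}$ there.

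I expect essentially all the difficulty to lie upstream, concentrated in Proposition \ref{prop:almostbal}: the metrics $\om_k^{*}=\om_{FS_k\circ Hilb_k(\phi^{*})}$ need not be $\sigma_k$-balanced, so Corollary \ref{prop:Z_min} does not apply directly, and one must instead prove that they are balanced up to a vanishing error — this is exactly where the Bergman kernel expansion and the extremal identity $S(\phi^{*})=\theta(\phi^{*})+\uS$ are used to make the first variation $k^{-n}f_k'(0)$ tend to zero, backed by the convexity of $Z_k^{\sigma}$ along geodesics (Lemma \ref{lem:Zconvex}) and the asymptotic concavity of $I_k^{\sigma}$ (Lemma \ref{lem:concave}). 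In the final assembly above the only point demanding care is the bookkeeping of additive constants — $\cL_k^{\sigma}$, $Z_k^{\sigma}$ and $E^G$ are each defined only up to a constant — so one must insist on a single sequence $c_k$ that serves simultaneously at $\phi$ and at $\phi^{*}$, so that these ambiguities cancel in the comparison; beyond that, no obstacle is anticipated.
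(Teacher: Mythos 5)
Your proposal is correct and follows essentially the same route as the paper: it chains Proposition \ref{prop:almostbal} (the almost-$\sigma$-balanced inequality at $Hilb_k(\phi^*)$) with Proposition \ref{prop:compare} (to trade $Z_k^{\sigma}\circ Hilb_k$ for $\cL_k^{\sigma}$ at both points) and Proposition \ref{prop:LquantizeE} (with a single sequence of constants $c_k$) before passing to the limit, exactly as in the paper's conclusion. Your remark about insisting on one common sequence $c_k$ so that the additive ambiguities cancel is precisely the point the paper's uniform convergence statement is designed to handle.
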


\begin{proof}
Let $\phi\in\cH^G$ and $\phi^*$ be the potential of an extremal metric.
\begin{equation}
\label{eq:end}
\begin{array}{ccc}
\cL_k^{\sigma}(\phi) & = & Z_k^{\sigma}\circ Hilb_k(\phi)+(\cL_k^{\sigma}(\phi)-Z_k^{\sigma}\circ Hilb_k(\phi)).
\end{array}
\end{equation}
By proposition~\ref{prop:almostbal}:
\begin{equation}
\label{eq:end}
\begin{array}{ccc}
\cL_k^{\sigma}(\phi)  & \geq & Z_k^{\sigma}\circ Hilb_k(\phi^*)+ k^n\ep_{\phi}(k)+(\cL_k^{\sigma}(\phi)-Z_k^{\sigma}\circ Hilb_k(\phi)) \\
\end{array}
\end{equation}
Then
\begin{equation}
\label{eq:end}
\begin{array}{ccc}
\cL_k^{\sigma}(\phi)   & \geq & \cL_k^{\sigma}(\phi^*)+(Z_k^{\sigma}\circ Hilb_k(\phi^*)-\cL_k^{\sigma}(\phi^*))+ \\
 & & k^n\ep_{\phi}(k)+(\cL_k^{\sigma}(\phi)-Z_k^{\sigma}\circ Hilb_k(\phi))\\
\end{array}
\end{equation}
To conclude, from proposition~\ref{prop:compare}, 
$$
k^{-n}(Z_k^{\sigma}\circ Hilb_k(\phi^*)-\cL_k^{\sigma}(\phi^*)) \rightarrow 0
$$
and
$$
k^{-n}(Z_k^{\sigma}\circ Hilb_k(\phi)-\cL_k^{\sigma}(\phi)) \rightarrow 0
$$ 
as $k$ tends to infinity. So does $\ep_{\phi}(k)$ by construction, see proposition~\ref{prop:almostbal}. Thus the result follows from proposition~\ref{prop:LquantizeE}, multiplying by $k^{-n}$ and letting $k$ go to infinity in~(\ref{eq:end}).
\end{proof}

\section{Appendix}
We give the proof of the results concerning the $\sigma$-balanced metrics.
We denote by $(\cdot, \cdot)$ any of the following Hermitian pairings
\begin{equation*}
	\begin{array}{ll}
		T^*X \times (T^*X\times L) \to L, & L\times (T^*X\times L) \to T^*X, 
	\\
		L\times L \to \mathbb{C}, & T^*X \times T^*X \to \mathbb{C}
	\end{array}
\end{equation*}
obtained by $\phi\in \mathcal{H}$ and $\omega_\phi$.
We denote the connection of type $(1,0)$ on the holomorphic tangent bundle $T'X$  by $\nabla$.
\subsection{The definition of $I^{\sigma}$}
\begin{proposition}
\label{prop:indep}
$I^\sigma(\phi)$ is independent of the choice of a path from $0$ to $\phi$.
\end{proposition}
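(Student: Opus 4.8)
The plan is to exploit the contractibility of the space of potentials. Since $\cH$ (and likewise $\cH^G$) is an open convex subset of $C^\infty(X,\R)$, it is contractible, so a smooth $1$-form on it has path-independent line integrals if and only if it is closed. Thus the statement reduces to showing that the $1$-form
$$
\a_\phi(\eta)=\int_X \eta\,(1+\Delta_\phi)\big(e^{\psi_{\sigma,\phi}}\big)\,d\mu_\phi=\int_X e^{\psi_{\sigma,\phi}}\,(1+\Delta_\phi)(\eta)\,d\mu_\phi
$$
is closed, i.e. that the pairing $(\xi,\eta)\mapsto D_\xi\big(\a_\phi(\eta)\big)$ on constant directions $\xi,\eta\in C^\infty(X,\R)$ is symmetric. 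One preliminary point must be addressed: $\psi_{\sigma,\phi}$ is determined by $(\ref{def:psi})$ only up to a $\phi$-dependent additive constant, and the closedness of $\a$ really does depend on this choice (changing the normalisation multiplies $\a$ by $e^{b(\phi)}$, which destroys closedness for a generic function $b$). The correct choice is the exact analogue of $(\ref{eq:norm})$: one requires $\int_X e^{\psi_{\sigma,\phi}}\,d\mu_\phi$ to be independent of $\phi$. This normalisation is natural because $\int_X \Delta_\phi f\,d\mu_\phi=0$ for all $f$, so the total mass of the measure $(1+\Delta_\phi)(e^{\psi_{\sigma,\phi}})\,d\mu_\phi$ defining $\a$ equals $\int_X e^{\psi_{\sigma,\phi}}\,d\mu_\phi$; its constancy plays here the role that the constancy of $N_k=\int_X\rho_k(\phi)\,d\mu_\phi$ plays in the classical path-independence of $I_k\circ Hilb_k$.

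The first ingredient is the first variation of $\psi_{\sigma,\phi}$. Differentiating $(\ref{def:psi})$ along $\phi+t\eta$ and solving the resulting $\sqrt{-1}\del\delb$-equation gives $\frac{d}{dt}\big|_{t=0}\psi_{\sigma,\phi+t\eta}=\sigma(1)^*\eta-\eta+c_\eta(\phi)$, where the constant $c_\eta(\phi)$ is fixed by differentiating the normalisation and using the self-adjointness of $\Delta_\phi$ with respect to $d\mu_\phi$, and is an explicit integral linear in $\eta$. Two features will carry the computation: this variation involves no derivatives of $\eta$, and $\sigma(1)$ is a K\"ahler isometry from $(X,\om_\phi+\sqrt{-1}\del\delb\psi_{\sigma,\phi})$ onto $(X,\om_\phi)$, so that pull-back by $\sigma(1)$ intertwines the corresponding Laplacians and preserves the associated volume form; this lets one handle the terms involving $\sigma(1)^*\xi$ and $\sigma(1)^*\eta$ by a change of variables.

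Next I would compute $D_\xi\big(\a_\phi(\eta)\big)$ by differentiating, in the $\xi$-direction, the three factors $e^{\psi_{\sigma,\phi}}$, $\Delta_\phi$ and $d\mu_\phi$ appearing in $\a_\phi(\eta)=\int_X e^{\psi_{\sigma,\phi}}(1+\Delta_\phi)(\eta)\,d\mu_\phi$ (using $D_\xi d\mu_\phi=-(\Delta_\phi\xi)\,d\mu_\phi$ and the standard formula for the variation of $\Delta_\phi$), and then move every derivative into a symmetric position by repeated use of the self-adjointness of $\Delta_\phi$ and of integration by parts against the closed forms $\om_\phi^{n-1}$. After collecting terms, the part not involving $c_\xi,c_\eta$ reorganises into an expression manifestly symmetric in $\xi\leftrightarrow\eta$; this is precisely the polarised version of the second-variation identity of Lemma~\ref{lem:hessian}, whose proof in Section~\ref{sec:hessian} carries out exactly this computation restricted to the diagonal $\xi=\eta$. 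The only possibly asymmetric contributions are those of $c_\xi(\phi)$ and $c_\eta(\phi)$, and they cancel thanks to the normalisation fixed above. Once $D_\xi\big(\a_\phi(\eta)\big)$ is symmetric, $\a$ is closed; then for any homotopy $\phi_{s,t}$ (with $s$ the path parameter) between two paths from $0$ to $\phi$ sharing the same endpoints, integrating the identity $\partial_t\big(\a_{\phi_{s,t}}(\partial_s\phi_{s,t})\big)=\partial_s\big(\a_{\phi_{s,t}}(\partial_t\phi_{s,t})\big)$ over $s\in[0,1]$ gives $\frac{d}{dt}\int_0^1\a_{\phi_{s,t}}(\partial_s\phi_{s,t})\,ds=\big[\a_{\phi_{s,t}}(\partial_t\phi_{s,t})\big]_{s=0}^{s=1}=0$, the last equality because the endpoints are fixed; hence $I^\sigma(\phi)=\int_0^1\a_{\phi_s}(\dot\phi_s)\,ds$ does not depend on the path.

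The main obstacle is the bookkeeping in this Hessian computation. The variation of the complex Laplacian $\Delta_\phi$ produces terms pairing second derivatives of $\xi$ (respectively $\eta$) with first derivatives of $e^{\psi_{\sigma,\phi}}$, and one has to check that, after all the integrations by parts, these---together with the normalisation constants $c_\xi(\phi)$ and $c_\eta(\phi)$---reassemble symmetrically. Fixing the normalisation $(\ref{eq:norm})$, rather than an arbitrary primitive for $\psi_{\sigma,\phi}$, is indispensable here: for a generic normalisation the $1$-form $\a$ is not closed and $I^\sigma$ genuinely depends on the path.
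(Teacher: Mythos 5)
Your reduction is the right one and matches the paper's: by the cocycle property (equivalently, contractibility of $\cH$), path-independence is the closedness of the one-form, i.e.\ the symmetry of the mixed second derivative along a two-parameter family; and you are also right that the additive normalization of $\psi_{\sigma,\phi}$ is what kills the otherwise fatal $e^{b(\phi)}$ ambiguity. Your first-variation formula $\delta_\eta\psi_{\sigma,\phi}=\sigma(1)^*\eta-\eta+c_\eta(\phi)$ is correct as far as it goes, since $\psi_{\sigma,\phi}=\psi_{\sigma,0}+\sigma(1)^*\phi-\phi+\mathrm{const}$.

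The gap is that the symmetry verification itself --- which is the entire content of the proof --- is not carried out, and the mechanism you propose for it is doubtful on two counts. First, the ``change of variables by $\sigma(1)$'' does not mesh with the weighted measure: $\sigma(1)$ pulls $\om_\phi$ back to $\om_\phi+\sqrt{-1}\del\delb\psi_{\sigma,\phi}$, so its Jacobian relative to $d\mu_\phi$ is $(\om_\phi+\sqrt{-1}\del\delb\psi_{\sigma,\phi})^n/\om_\phi^n$, \emph{not} $e^{\psi_{\sigma,\phi}}$; substituting in a term such as $\int_X(\sigma(1)^*\xi)\,e^{\psi_{\sigma,\phi}}(1+\Delta_\phi)(\eta)\,d\mu_\phi$ therefore does not produce its $\xi\leftrightarrow\eta$ mirror, and you give no other reason the asymmetric pieces should cancel. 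Second, you lean on a polarised Lemma~\ref{lem:hessian}, but the proof of that lemma in Section~\ref{sec:hessian} rests on the identity (\ref{eq:derivative_psi}), $\partial_s\psi_{\sigma,\Phi}=\bigl(\nabla\psi_{\sigma,\Phi},\nabla\tfrac{\partial\Phi}{\partial s}\bigr)$, which is a different expression for $\delta\psi$ from yours (the two agree only to first order in the flow time of $\sigma$), so you cannot consistently invoke that lemma while working with your formula. The paper's argument closes precisely because of two ingredients absent from your plan: the identity (\ref{eq:derivative_psi}) with vanishing constant (forced by the normalization), and the self-adjointness of the drift operator $\Delta_\Phi+(\nabla\psi_{\sigma,\Phi},\nabla\,\cdot\,)$ with respect to $e^{\psi_{\sigma,\Phi}}d\mu_{\Phi}$, which is what makes the cross terms coming from $\delta(e^{\psi_{\sigma,\Phi}})$ and $\delta(d\mu_\Phi)$ pair off symmetrically. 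Without a substitute for these facts your computation has no identified reason to be symmetric.
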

\begin{proof}
Since $I^\sigma(\phi)$ satisfies the cocycle property
$$
	I^\sigma(\phi_1,\phi_3)
	=
	I^\sigma(\phi_1,\phi_2)
	+
	I^\sigma(\phi_2,\phi_3)
$$
by definition, it is sufficient to prove $\frac{\partial^2}{\partial s \partial t}I^\sigma(\phi_{0,0},\phi_{t,s})$ is symmetric with respect to $s$ and $t$ for any family of path 
$$
	\{\Phi=\phi_{t,s} \mid (s,t)\in [0,1]\times [0,1],\,  \phi_{0,s}=\phi_{1,s}\equiv 0\}
$$
in $\mathcal{H}$.
\begin{eqnarray}
	\nonumber
	&&
		\frac{\partial^2}{\partial s \partial t}I^\sigma(\phi_{0,0},\phi_{t,s})
		=
		\frac{\partial}{\partial s}\int_X \big((1+\Delta_\Phi)\frac{\partial \Phi}{\partial t}\big)e^{\psi_{\sigma,\Phi}}d\mu_{\Phi}
	\\
	\nonumber
	&=&
		\int_X \big((\frac{\partial}{\partial s}\Delta_\Phi)\frac{\partial\Phi}{\partial t}\big)e^{\psi_{\sigma,\Phi}}
		d\mu_{\Phi}
		+\int_X \big((1+\Delta_\Phi)\frac{\partial^2\Phi}{\partial s \partial t}\big)e^{\psi_{\sigma,\Phi}}
		d\mu_{\Phi}
	\\
	\label{eq:dtds_F0}
	&&\quad
		+\int_X \big((1+\Delta_\Phi)\frac{\partial\Phi}{\partial t}\big)\big(\frac{\partial e^{\psi_{\sigma,\Phi}}}{\partial s}\big)d\mu_{\Phi}
		-\int_X \big((1+\Delta_\Phi)\frac{\partial\Phi}{\partial t}\big)e^{\psi_{\sigma,\Phi}}\big(\Delta_\Phi\frac{\partial\Phi}{\partial s}\big)d\mu_{\Phi}.
\end{eqnarray}
The first term in (\ref{eq:dtds_F0}) is
\begin{eqnarray*}
		\int_X \big(\nabla\overline{\nabla}\frac{\partial\Phi}{\partial t},\nabla\overline{\nabla} \frac{\partial\Phi}{\partial s}\big)e^{\psi_{\sigma,\Phi}}
		d\mu_{\Phi}
\end{eqnarray*}
which is symmetric.
The second term is obviously symmetric.
The third term is
\begin{equation}\label{eq:dd_F0_3}
		\int_X \frac{\partial\Phi}{\partial t}\big(\nabla \psi_{\sigma,\Phi}, \nabla \frac{\partial\Phi}{\partial s}\big)e^{\psi_{\sigma,\Phi}}
		d\mu_{\Phi}
		+
		\int_X \big(\Delta_\Phi\frac{\partial\Phi}{\partial t}\big)\big(\nabla\psi_{\sigma,\Phi}, \nabla \frac{\partial\Phi}{\partial s}\big)e^{\psi_{\sigma,\Phi}}
		d\mu_{\Phi}.
\end{equation}
Here we use the following equality.
\begin{lemma}
\begin{equation}\label{eq:derivative_psi}
	\frac{\partial\psi_{\sigma,\Phi}}{\partial s}=\big(\nabla\psi_{\sigma,\Phi},\nabla\frac{\partial\Phi}{\partial s}\big).
\end{equation}
\end{lemma}
\begin{proof}
Let $v$ be the gradient vector field of $\frac{\partial\Phi}{\partial s}$, i.e.,
\begin{equation}\label{eq:cf}
	v
	=grad_{\omega_{\Phi}}\bigg(\frac{\partial\Phi}{\partial s}\bigg)
	=\sum_{i,j}g^{i\bar{j}}\frac{\partial}{\partial\bar z^j}\bigg(\frac{\partial\Phi}{\partial s}\bigg)
	\frac{\partial}{\partial z^i}.
\end{equation}
We have
\begin{eqnarray*}
		\frac{\partial}{\partial s}(\sigma(1)^*\omega_{\Phi}-\omega_{\Phi})
	&=&
		L_{v}(\sigma(1)^*\omega_{\Phi}-\omega_{\Phi})
		=
		\frac{\sqrt{-1}}{2\pi}
		d\iota_{v}
		\partial\bar\partial\psi_{\sigma,\Phi}
	\\
	&=&
		\frac{\sqrt{-1}}{2\pi}
		\partial\bar\partial \big(\nabla\psi_{\sigma,\Phi},\nabla\frac{\partial\Phi}{\partial s}\big)
\end{eqnarray*}
where $L_{v}$ is the Lie derivative along $v$.
Then, there exists some constant $c$ such that
\begin{equation}
	\label{eq:constant_c}
		\frac{\partial\psi_{\sigma,\Phi}}{\partial s}
	=
		\big(\nabla\psi_{\sigma,\Phi},\nabla\frac{\partial\Phi}{\partial s}\big)+c.
\end{equation}
Recall that 
$$
	\int_X\psi_{\sigma,\Phi}d\mu_{\Phi}
$$
is constant with respect to $s,\,t$ by normalization of $\psi_{\sigma,\Phi}$.
Since
$$
	0
	=\frac{\partial}{\partial s}\int_X {\psi_{\sigma,\Phi}}d\mu_{\Phi}
	=\int_X \bigg(\frac{\partial\psi_{\sigma,\Phi}}{\partial s}-\big(\nabla\psi_{\sigma,\Phi},\nabla\frac{\partial\Phi}{\partial s}\big)\bigg)
	d\mu_{\Phi},
$$
the constant $c$ in (\ref{eq:constant_c}) is zero.
Hence, (\ref{eq:derivative_psi}) is proved.
\end{proof}
\noindent
The forth term is
\begin{equation}\label{eq:dd_F0_4}
	-\int_X e^{\psi_{\sigma,\Phi}}\frac{\partial\Phi}{\partial t}\Delta_\Phi\frac{\partial\Phi}{\partial s}d\mu_{\Phi}
	-
	\int_X e^{\psi_{\sigma,\Phi}}\Delta_\Phi\frac{\partial\Phi}{\partial t}\Delta_\Phi\frac{\partial\Phi}{\partial s}d\mu_{\Phi}.
\end{equation}
The sum of the first term in (\ref{eq:dd_F0_3}) and the first term in (\ref{eq:dd_F0_4}) is
$$
	-\int_X \frac{\partial\Phi}{\partial t}\bigg(\Delta_\Phi\frac{\partial\Phi}{\partial s}+\big(\nabla\psi_{\sigma,\Phi}, \nabla\frac{\partial\Phi}{\partial s}\big)\bigg)e^{\psi_{\sigma,\Phi}}
	d\mu_{\Phi}.
$$
This is symmetric, because the operator $\Delta_\Phi+(\nabla\psi_{\sigma,\Phi}, \nabla )$ is self-adjoint with respect to the weighted volume form $e^{\psi_{\sigma,\Phi}}d\mu_{\Phi}$.
The remaining is the second term in (\ref{eq:dd_F0_3}).
It is 
$$
		-\int_X \big(\nabla\overline{\nabla}\psi_{\sigma,\Phi},
			\nabla\frac{\partial\Phi}{\partial t}\overline{\nabla}\frac{\partial\Phi}{\partial s}
		\big)e^{\psi_{\sigma,\Phi}}
		d\mu_{\Phi}
	-
		\int_X\big(\nabla\frac{\partial\Phi}{\partial t},\nabla\psi_{\sigma,\Phi}\big)
		\big(\nabla\frac{\partial\Phi}{\partial s},\nabla\psi_{\sigma,\Phi}\big)
		e^{\psi_{\sigma,\Phi}}
		d\mu_{\Phi},
$$
which is symmetric.
\end{proof}
\subsection{Second derivative of $I_k^{\sigma}$}
\label{sec:hessian}

We give a computation of the second derivative of $I^\sigma_k$.

\begin{proof}[Proof of Lemma \ref{lem:hessian}]
\begin{eqnarray}
	\nonumber
		\frac{d^2 }{ds^2}I^\sigma_k(\phi_s)
	&=&
		k^n\frac{d}{ds}\int_X (k+\Delta_\phi )\phi' e^{{\psi_{\sigma,\phi}}} d\mu_{\phi}
	\\
	\nonumber
	&=&
		k^n\int_X (\nabla\overline{\nabla}\phi',\nabla\overline{\nabla}\phi')e^{\psi_{\sigma,\phi}}
		 d\mu_{\phi}
		+k^n\int_X(k+\Delta_\phi )\phi''e^{\psi_{\sigma,\phi}} d\mu_{\phi}
	\\
	\label{eq:2nd_vr_01}
	&&\quad
		+k^n\int_X((k+\Delta_\phi )\phi'){\psi'_{\sigma,\phi}}e^{\psi_{\sigma,\phi}} d\mu_{\phi}
		-k^n\int_X((k+\Delta_\phi )\phi')e^{\psi_{\sigma,\phi}}\Delta_\phi \phi' d\mu_{\phi}.
\end{eqnarray}
From (\ref{eq:derivative_psi}), the third term in (\ref{eq:2nd_vr_01}) is equal to
\begin{equation}\label{eq:2nd_vr_02}
	k^n\int_X((k+\Delta_\phi )\phi')(\nabla{\psi_{\sigma,\phi}},\nabla\phi')e^{\psi_{\sigma,\phi}} d\mu_{\phi}.
\end{equation}
By the partial integral, the forth term in (\ref{eq:2nd_vr_01}) is equal to
\begin{eqnarray}
	\nonumber
	&&
			-k^{n+1}\int_X|\nabla\phi'|^2e^{\psi_{\sigma,\phi}} d\mu_{\phi}
			-k^{n+1}\int_X\phi'e^{\psi_{\sigma,\phi}}(\nabla{\psi_{\sigma,\phi}},\nabla\phi') d\mu_{\phi}
	\\
	\label{eq:2nd_vr_03}
	&&\quad
		-k^n\int_X(\nabla\Delta_\phi \phi',\nabla\phi')e^{\psi_{\sigma,\phi}} d\mu_{\phi}
		-k^n\int_X(\Delta_\phi \phi')	(\nabla{\psi_{\sigma,\phi}},\nabla\phi')e^{\psi_{\sigma,\phi}} d\mu_{\phi}.
\end{eqnarray}
Remark that the sum of the second and forth terms in (\ref{eq:2nd_vr_03}) cancels (\ref{eq:2nd_vr_02}).
The third term in (\ref{eq:2nd_vr_03}) is
\begin{eqnarray}
	\nonumber
	&&
		-k^n\int_X
		(\nabla\overline{\nabla}\phi',\nabla\overline{\nabla}\phi')
		e^{\psi_{\sigma,\phi}} d\mu_{\phi}
		-k^n\int_X
		(\nabla\overline{\nabla}\phi',\nabla{\psi_{\sigma,\phi}}\overline{\nabla}\phi')
		e^{\psi_{\sigma,\phi}} d\mu_{\phi}
	\\
	\nonumber
	&=&
		-k^n\int_X
		(\nabla\overline{\nabla}\phi',\nabla\overline{\nabla}\phi')
		e^{\psi_{\sigma,\phi}} d\mu_{\phi}
		-k^n\int_X
		|\nabla\phi'|^2\Delta_\phi {\psi_{\sigma,\phi}} 
		e^{\psi_{\sigma,\phi}} d\mu_{\phi}
	\\
	\nonumber
	&&\quad
		+k^n\int_X
		|\nabla\phi'|^2|\nabla{\psi_{\sigma,\phi}}|^2
		e^{\psi_{\sigma,\phi}} d\mu_{\phi}
	\\
	\label{eq:2nd_vr_04}
	&=&
		-k^n\int_X
		(\nabla\overline{\nabla}\phi',\nabla\overline{\nabla}\phi')
		e^{\psi_{\sigma,\phi}} d\mu_{\phi}
		-k^n\int_X
		|\nabla\phi'|^2\Delta_\phi 
		e^{\psi_{\sigma,\phi}} d\mu_{\phi}.		
\end{eqnarray}
Substituting (\ref{eq:2nd_vr_02}), (\ref{eq:2nd_vr_03}) and (\ref{eq:2nd_vr_04}) for (\ref{eq:2nd_vr_01}), we get the second derivative of $I^\sigma_k(\phi)$.
\end{proof}


\end{document}